\setlist[enumerate]{leftmargin=.5in}
\setlist[itemize]{leftmargin=.5in}
\numberwithin{equation}{section}
\theoremstyle{plain}
\newtheorem{theorem}{Theorem}[section]
\newtheorem{rem}{Remark}[section]
\newtheorem{lemma}{Lemma}[section]
\newtheorem{corollary}{Corollary}[section]
{\bf}{\it} 
\newtheorem{defi}{Definition}[section] 
\newtheorem{prop}{Proposition}[section]
\newcommand{\bu}{\bullet}     
\newcommand{\bo}[1]{\mathbf{#1}} 
\newcommand{\indic}{\hbox{1\kern-.24em\hbox{I}}}      
\newcommand{\var}{Var_\mu}     
\newcommand{\esp}{\mathbb{E}}     
\newcommand{\trace}{\text{Tr}}
\newcommand{\y}{y}
\newcommand{\x}{x}
\newcommand{\X}{X}
\newcommand{\z}{z} 
\newcommand{\R}{\mathbb{R}}
\newcommand{\N}{\mathbb{N}}
\newcommand{\M}{f}
\newcommand{\Set}{\mathcal{F}}      
\newcommand{\tef}{f^{tot}} 
\newcommand{\tief}{f^{sup}}  
\newcommand{\tefg}{h}  
\newcommand{\gtef}{h}             
\newcommand{\T}{T}       
\newcommand{\NN}{n}     
\newcommand{\norme}[1]{\left|\left| #1 \right|\right|_{L^2}}
\newcommand{\normf}[1]{\left|\left| #1 \right|\right|_F}
\newcommand{\D}{\Sigma}
\newcommand{\matleq}{\preceq}  
\newcommand{\rr}{t}
\newcommand{\ttt}{r}
\newcommand{\argmin}[1]{\underset{#1}{\operatorname{arg}\,\operatorname{min}}\;}
\begin{document}                 
\begin{frontmatter}
\title{Integral equalities and inequalities: a proxy-measure for multivariate sensitivity analysis}
\runtitle{Derivative-based ANOVA and Poincar\'e' inequalities}
   
\begin{aug}               
\author{\fnms{Matieyendou} \snm{Lamboni}\ead[label=e1]{matieyendou.lamboni@gmail.com, matieyendou.lamboni@univ-guyane.fr}}   

\address{University of Guyane and UMR Espace-Dev-228 (University of Guyane, University of R\'eunion, IRD, University of Montpellier), French Guiana, France\\
\printead{e1}}   
              

\runauthor{M. Lamboni}
\end{aug}  
 
\begin{abstract}   
Weighted Poincar\'e-type and related inequalities provide upper bounds of the variance of  functions. Their applications in sensitivity analysis allow for quickly identifying the active inputs. Although the efficiency in prioritizing inputs depends on the upper bounds, the latter can take higher values, and therefore useless in practice. In this paper, an optimal weighted Poincar\'e-type inequality and gradient-based expression of the variance (integral equality) are studied for a wide class of borel probability measures. For a function $\M : \R \to \R^\NN$ with $\NN \in \N^*$, we show that   
$$          
\var\left(\M\right) =\int_{\Omega \times \Omega}   
\nabla \M \left(\x\right) \nabla \M \left(\x'\right)^\T 
 \frac{F\left(\min(\x,\, \x'\right) - F(\x)F(\x')}{\rho(\x) \rho(\x')}
 \,  d\mu(\x) d\mu(\x') \, ,                                
$$              
and                
$$
\var\left(\M\right) \matleq \frac{1}{2}\int_{\Omega}            
\nabla \M \left(\x\right) \nabla \M \left(\x\right)^\T 
 \frac{F(\x)\left( 1-F(\x)\right)}{\left[\rho(\x)\right]^2}
 \,  d\mu(\x)  \, ,                                
$$      
with $ \var\left(\M\right)=\int_{\Omega} \M \M^\T\, d\mu -\int_{\Omega} \M \, d\mu \int_{\Omega} \M^\T\, d\mu$, $F$ and $\rho$ the distribution and the density functions, respectively. \\             
These results are generalized to cope with multivariate functions by making use of cross-partial derivatives, and they allow for proposing a new proxy-measure for multivariate sensitivity analysis, including Sobol' indices. Finally, the analytical and numerical tests show the relevance of our proxy-measure for identifying important inputs by improving the upper bounds from the Poincar\'e  inequalities.                    
\end{abstract}    

\begin{keyword}[class=MSC]
\kwd[Primary ]{26D10}
\kwd{49Q12}
\kwd[; secondary ]{78M05}
\end{keyword} 
        
   
\begin{keyword}
\kwd{Derivatives-based ANOVA}
\kwd{Dimension reduction} 
\kwd{Generalized sensitivity indices}
\kwd{Poincar\'e-type' inequalities}  
\kwd{Sobol' indices} 
\end{keyword}
\end{frontmatter}          

\doublespacing         
\section{Introduction}
            
Multivariate sensitivity analysis (MSA) (\cite{lamboni18a, lamboni11, lamboni09, gamboa14,xiao17}), including variance-based sensitivity analysis (VbSA) (\cite{sobol93,saltelli00,ghanem17,owen13b}), is the standard way of assessing the importance of input factors as well as theirs interactions regarding the variability of the model output(s). The sample-based estimations of generalized sensitivity indices (GSIs) from MSA, including Sobol' indices, often make use of a lot of model runs and have been largely investigated (\cite{lamboni18,lamboni18a,lamboni11,gamboa14,lamboni16b,lamboni16,plischke13,borgonovo14,fruth14,liu06,saltelli10b,saltelli02b,sobol01,owen13}). Recently, the improved, sample-based estimators (\cite{lamboni18a,lamboni18,lamboni16b}) still require a lot of model runs to obtain accurate values of sensitivity indices for models with important interactions among input factors and/or in the presence of skewed or heavy-tailed distributions of input factors.\\     
      
In the presence of important interactions among input factors, efficient and low-cost upper bounds of the total and total-interaction sensitivity indices can be useful for quickly selecting important input factors (i.e., screening inputs). So far, the upper bounds of the total and total-interaction sensitivity indices are based on the derivative global sensitivity measure (DGSM) (\cite{morris91,sobol09,kucherenko09}), which is computationally more attractive than VbSA or MSA (sample-based methods). The upper bound of the total index (resp. total-interaction index), which is a (known) constant times the DGSM index (resp. cross-derivative index), can take higher values especially for total-interaction indices  (\cite{lamboni13,roustant14,kucherenko16,roustant17}). \\    
            
Formally, the upper bounds of sensitivity indices are an application of the Poincar\'e-type inequalities or the weighted Poincar\'e-type inequalities (\cite{roustant17,bobkov99,ricciardi05}). Indeed, these inequalities are used to establish the upper bounds of the total and total-interaction sensitivity indices (\cite{lamboni13,roustant14}). Improving (if possible) the constants in Poincar\'e-type' inequalities in one hand, and the constants and weights in the weighted Poincar\'e or related inequalities in the other hand is essential to expect obtaining an efficient and low-cost screening measure when using the upper bounds. Recently, the authors in \cite{roustant17} improve the constant in Poincar\'e' inequalities for some probability measures. \\
                                      
One way of expecting to obtain the best (known) upper bounds of the variance for a probability measure consists in  expanding the variance of functions as an integral equality involving the derivatives. In this paper,  we propose i) a derivative-based expression of a function $\M:\R^d \to \R^\NN$ and its variance (integral equalities) for a wide class of Borel probability measures, and ii) new and optimal weighted Poincar\'e-type' inequalities. An application of these new inequalities allows for developing a new proxy-measure for MSA, that is, simple formulas that approximate the estimators of generalized sensitivity indices, including Sobol' indices. We use the estimators of the upper bounds of the non-normalized GSIs as the non-normalized proxy-measure for MSA. \\                
      
The paper is organized as follows: in Section \ref{sec:poin}, we recall the definitions of Poincar\'e-type' inequalities using the probability theory and the total differential of a function. In Section  \ref{sec:main}, we provide our main results. First, we provide a theoretical foundation for the derivative-based expression of a function $\M : \R^d \to \R^\NN$, with $d\geq 1$ and $\NN\geq 1$. Second, we derive the expression of the variance of $\M$ using the gradient and  cross-partial derivatives of $\M$, the cumulative distribution functions (CDF), and the probability density functions (PDF) of input factors. Third, we give  new and optimal weighted Poincar\'e-type' inequalities. Section \ref{sec:msa} deals with the application of these new results in  uncertainty quantification such as MSA. First, we recall two definitions of generalized sensitivity indices (from MSA) using the sensitivity functionals and the Frobenius norm; and second, we propose the new proxy-measures of the total and total-interaction generalized sensitivity indices and  Sobol' indices, and third, we illustrate our approach on test cases. We conclude this work in Section \ref{sec:con}.                    
						    					       		                        
\section*{Global notation}              
This section defines the symbols that are going to be used throughout
the paper. For integer $d \in \N^*$ and $j =1,\, \ldots,\, d$, we use $\mu_j$ for a Borel probability measure on $\Omega_j \subseteq \R$
 with density $d\mu_j(x_j)/dx_j=\rho_j(x_j)$ w.r.t. the Lebesgue measure, $X_j$ for a random variable or input factor from $\mu_j$, $x_j$ for a realization or a sample value of $X_j$, and $\bo{\X} :=\{X_j, \, j \in \{1,\, \ldots,\, d\} \}$ for a random vector. We use $\rho_j$ (resp. $F_j$) for the positive and continuous probability density function (PDF) (resp. the cumulative distribution function: CDF) of $\X_j$ with  $j =1,\, \ldots,\, d$.  
We use $\Omega :=\Omega_1\times \Omega_2\times \ldots \times \Omega_d$, $\mu:= \otimes_{j \in \{1,\,\ldots,\, d\}} \mu_j$, $\rho(\bo{\x})$, $F(\bo{\x})$ for a joint support, a Borel probability measure on $\Omega \subseteq \R^d$, a joint PDF and a joint CDF of $\bo{\X}$, respectively. \\
         
For integer $\NN \in \N^*$, $\M :  \R^d \to \R^\NN$ denotes a deterministic function that includes $d$ random variables or input factors $\bo{\X}$ and provides $\NN$ outputs $\M(\bo{\X})$. We use $u \subseteq \{1,2, \ldots, d \}$ for a non-empty subset of $\{1,2, \ldots, d \}$, $\bar{u} := \{1,2, \ldots, d \}\backslash u$ 
and $|u|$ for its cardinality (i.e., the number of elements in $u$).  For a given $u$, we use $\bo{\X}_u := \{\X_j, \, j \in u\}$ for a subset of input factors and $\bo{\X}_{\sim u} := \{\X_j, \, j \in \bar{u}\}$  for the vector containing all input factors, except $\bo{\X}_u$. We have the following partition: $\bo{\X} = (\bo{\X}_u, \bo{\X}_{\sim u})$. We use $\Omega_u :=\boldsymbol{\times}_{j \in u} \Omega_j$, $d\mu (\bo{x}_u):=\prod_{j \in u} d\mu_j (x_j)$;  $\rho_u(\bo{x}_u)$ and $F_u(\bo{x}_u)$ for the PDF and CDF of $\bo{\X}_u$, respectively. \\                        
  
For an $\NN \times \NN$ square matrix $\Sigma :=\left(\sigma_{ij},\, i,\, j \in \{1,\, \ldots,\, \NN\} \right)$, the trace ($\trace(\bu)$) and the Frobenius norm ($\normf{\bu}$)
of $\Sigma$ are defined as follows:         
$$
\trace(\Sigma) :=\sum_{i=1}^\NN \sigma_{ii} \, ,
$$ 
$$    
\normf{\Sigma}^2 :=\trace\left(\Sigma \Sigma^\T \right)\, .          
$$         

         
In what follows, we consider only independent input factors (assumption A1) and Borel measurable and differentiable functions $\M :  \R^d \to \R^\NN$ having finite second moments, that is, 
$$
\esp \left[ \norme{\M(\bo{\X})}^2\right] := \int_\Omega \norme{\M(\bo{x})}^2 \, d\mu(\bo{x}) < +\infty \, , 
$$     
with $\norme{\bu}$ the Euclidean norm on $\R^\NN$.          
                                             
\section{Preliminaries} \label{sec:poin} 
\subsection{Poincar\'e-type' inequalities} 
In this section, we recall the well-known Poincar\'e-type inequalities using the theory of probability, which allow for elementary demonstration of our results. \\

Let $f : \R \to \R$ be a Borel measurable and differentiable function and $\nabla f$ be its gradient, $\mu$ be an absolutely continuous probability measure with respect to the Lebesgue measure on its open support $\Omega$. A Borel probability measure $\mu$ on $\Omega \subseteq \R$ admits the Poincar\'e-type inequality (\cite{bobkov99,roustant17}) if there exists a finite and positive constant $0\leq C(\mu) <+\infty$ such that        
\begin{equation} \label{def:pi}   
\int_\Omega f(x)^2 \, d\mu(x) \leq C(\mu) \times \int_{\Omega} \left|   
\nabla f(x) \right|^2 \,  d\mu  \, ,   
\end{equation}           
with $\int_\Omega f(x) \, d\mu(x) =0$. \\                          
                 
We use $C_{op}(\mu)$ for the best constant value of $C(\mu)$, that is, there exists a function $f_0 : \R \to \R$ such that Equation (\ref{def:pi}) becomes equality with $C(\mu)=C_{op}(\mu)$. \\                           
         
Equation (\ref{def:pi}) is an integral inequality, and it gives an upper bound of the variance of $f$, that is,       
$$       
\var\left(f\right) \leq C(\mu) \times \int_{\Omega} \left|   
\nabla f \right|^2 \,  d\mu  \, ,      
$$         
 where $f$ and $\nabla f$ are square-integrable functions.  \\           
To find the best possible upper-bound of the variance, we can introduce weight functions in the right-hand term of Equation (\ref{def:pi}), and this leads to what we call the weighted Poincar\'e-type inequalities. For a Borel measurable weight function $w : \R \to \R_+$, the Borel probability measure $\mu$ admits the weighted Poincar\'e-type inequality (\cite{bobkov09, bobkov09a}) if there exists a finite and positive constant $0\leq C(\mu,\, w) <+\infty$ such that         
     
\begin{equation} \label{def:wpi}          
\var\left(f\right) \leq C(\mu,\, w) \times \int_{\Omega} \left|   
\nabla f \right|^2 w^2 \,  d\mu  \, .  
\end{equation}                  
      
We use $C_{op}(\mu,\,w)$ for the best constant value of $C(\mu, \, w)$, that is, there exists a function $f_0$ such that Equation (\ref{def:wpi}) becomes equality. Equation (\ref{def:pi}) is a particular case of Equation (\ref{def:wpi}) by choosing $w=1$. So far, the choice of $w$ in order to obtain the best constant $C_{op}(\mu,\,w)$ is not obvious, and it can be hard in practice. Although Equations (\ref{def:pi}) and (\ref{def:wpi}) are not directly comparable (\cite{bobkov09}), one motivation behind using the weighted Poincar\'e inequalities is that one can expect to obtain the lowest upper bound of the variance by properly choosing $w$. Of course, the minimum values of the upper bounds of the variance in (\ref{def:pi}) and (\ref{def:wpi}) should be used as the best  known upper bound. \\         
     
For a function $\M : \R \to \R^\NN$, Equation (\ref{def:wpi}) becomes 
\begin{equation} \label{def:wpim}         
\var\left(\M\right) \matleq C(\mu,\, w)  \int_{\Omega}              
\nabla \M  \nabla \M^\T w  \,  d\mu  \, ,     
\end{equation}        
where  $ \var\left(\M\right)=\int_{\Omega} \M \M^\T\, d\mu -\int_{\Omega} \M \, d\mu \int_{\Omega} \M^\T\, d\mu$ and $\matleq$ is the symbol for the Loewner partial order on matrices, that is, $\Sigma_1 \matleq \Sigma_2$ if $\Sigma_2 -\Sigma_1$ is a positive semi-definite matrix.
 
\subsection{Total differential of a function} \label{sec:tdf}
This section recalls the total differential of a function $\M : \R^d \to \R^\NN$, as our results make use of that decomposition. \\     

Let $\bo{X}$ be $d$ input factors and $\bo{x}$ be a sample value of $\bo{X}$. For a Borel measurable and differentiable function $\M : \R^d \to \R^\NN$, the usual total differential of $\M$ with higher-order terms (i.e., complete total differential of $\M$) at a sample value $\bo{x}$ is the quantity $d\M (\bo{x})$ given by (\cite{courant36,kubicek15})                    
\begin{eqnarray} \label{eq:tde}          
d\M (\bo{x}) & = & \sum_{j=1}^d  \frac{\partial \M}{\partial \x_j}(\bo{x})\, d\x_j
 +    \sum_{1\leq i <j\leq d}^d \frac{\partial^{2} \M}{\partial \bo{\x}_{\{i,j\}}}(\bo{x})  \, d\bo{\x}_{\{i,j\}}
+ \ldots         
+  \frac{\partial^d \M}{\partial \bo{\x}}(\bo{x}) \, d\bo{\x} \, \nonumber \\
  & = & \sum_{\substack{v,\,v \subseteq \{1,\, \ldots,\, d\} \\ |v|>0}} \frac{\partial^{|v|} \M}{\partial \bo{\x}_{v}}(\bo{x})\,  d\bo{\x}_{v} \, ,  
\end{eqnarray}     
where $\frac{\partial^{|v|} \M}{\partial \bo{\x}_{v}} :=\prod_{j \in v} \frac{\partial \M}{\partial x_{j}}$ stands for the $|v|^{\text{th}}$  cross-partial derivatives of each component of $\M$ with respect to each $\x_{j}$, with $j \in v$.  \\                     
                   
Equation (\ref{eq:tde}) expands the total differential of $\M$ as a sum of increasing cross-partial derivatives, which can be treated independently. It shows that the total infinitesimal variation of a function comes from the contribution of all infinitesimal variations of that function with respect to each input and their interactions. One more interesting aspect of this decomposition is that 
$$ \forall\, v\subseteq u, \quad   
 \frac{\partial^{|v|} \M}{\partial \bo{\x}_{v}}(\bo{x})=\bo{0} 
\Longrightarrow 
\frac{\partial^{|u|} \M}{\partial \bo{\x}_{u}}(\bo{x}) =  \bo{0} \, .   
$$ 
 It means that any high-order cross-partial derivative w.r.t. $\bo{x}_u$ vanishes when a low-order cross-partial derivative vanishes. \\  
              
The integral form of the total differential $d\M$ is obtained by integrating Equation (\ref{eq:tde}), and it gives the total variation of $\M$. If we use      
 $u \subseteq \{1,\, \ldots,\, d\}$ for a non-empty subset, $\bo{\x}_{u}$, \, $\bo{\y}_{u}$ and $\bo{\z}_{u}$ for three independent values of $\bo{X}_u$, and $\bo{\x}_{\sim u}$ for a sample value of $\bo{X}_{\sim u}$, the total variation of $\M$ is given as follows (\cite{courant36,kubicek15}):  
      
\begin{eqnarray}   \label{eq:ited}                   
 \M(\bo{\z}_{u}, \bo{\x}_{\sim u}) - \M(\bo{\y}_{u}, \bo{\x}_{\sim u}) & = & \sum_{j \in u} \int_{\y_{j}}^{\z_{j}} 
\frac{\partial \M}{\partial \x_{j}}(\bo{x}) \, d\x_{j}    
 + \sum_{\substack{v,\, v \subseteq u \\ |v|>1}} \int_{\bo{\y}_{v}}^{\bo{\z}_{v}}  
 \frac{\partial^{|v|} \M}{\partial \bo{\x}_{v}}(\bo{x}) \, d\bo{\x}_{v} \, . 
\end{eqnarray}          
                              
\section{Main results} \label{sec:main}
This section aims to provide the decomposition of a function $\M : \R^d \to \R^\NN$ and its variance (i.e., integral equality) in one hand and a new upper bound of the variance of $\M$ in the other hand by making use of derivatives, CDFs and PDFs.\\
     
Namely, let $\M: \R^d \to \R^\NN$ be a Borel measurable function, $\bo{X}$ be $d$ input factors, $\bo{x}$ be a sample value of $\bo{X}$, and $v \subseteq \{1,\, \ldots,\, d\}$ be a non-empty subset (i.e., $|v|>0$). We use $\frac{\partial^{|v|} \M}{\partial \bo{\x}_{v}}(\bo{x}) :=\prod_{j \in v} \frac{\partial \M}{\partial x_{j}}$ for the $|v|^{\text{th}}$ cross-partial derivatives of each component of $\M$ with respect to each $\x_{j}$ with $j \in v$, and $\indic_{[a \geq b]}$ for the indicator function, that is, 
$\indic_{[a \geq b]} =1$ if $a \geq b$ and $0$ otherwise. To derive our results, we assume, in what follows, that  \\           
    
(A1): the $d$ random variables or input factors $\bo{X}$ are independent, \\         

 (A2): the outputs $\M(\bo{X})$ have finite second moments, that is, $\esp\left[\norme{\M(\bo{\X})}^2\right]  < +\infty$, \\ 
    
 (A3): each component of $\M$ is  Borel measurable and differentiable w.r.t. $\bo{x}_v$, $\forall\, v \subseteq \{1,\, \ldots,\, d\}$,\\   
     
 (A4): for all $v \subseteq \{1,\, \ldots,\, d\}$, $\frac{\partial^{|v|} \M}{\partial \bo{\x}_{v}}$ is a measurable function and $\esp\left[\norme{\frac{\partial^{|v|} \M}{\partial \bo{\x}_{v}}(\bo{\X})}^2\right]  < +\infty$, \\ 
       
(A5): a Borel probability measure $\mu_j(dx_j)$ is absolutely continuous w.r.t. the Lebesgue measure with a continuous PDF $\rho_j$ on its open support $\Omega_j \subseteq \R$ and $\rho_j(x_j) \in \R^*_+$ for all $j \in \{1,\, \ldots,\, d\}$.
                  
\subsection{ANOVA-type decomposition based on derivatives} \label{sec:decf}
To establish a new upper bound of the variance of $\M$ in Section \ref{sec:wpc}, we need a decomposition of the variance of $\M$ that involves the derivatives of $\M$. 
This section provides new decompositions of a function $\M : \R^d \to \R^\NN$ and its variance using the gradient and cross-partial derivatives of $\M$, the CDFs and PDFs of input factors.                      
Now, we are going to give in Theorem  \ref{lem:deriaov} the first-type of this decomposition.           
            
\begin{theorem}     \label{lem:deriaov}
Let $\M: \R^d \to \R^\NN$ denote a deterministic function, $\bo{X}$ be $d$ input factors, $F_j$ (resp. $\rho_j$) be the CDF (resp. the PDF) of $X_j$ with $j\in \{1,\, \ldots,\, d\}$, $\bo{x}$ be a sample value of $\bo{X}$, and $u \subseteq \{1,\, \ldots,\, d\}$ be a subset with $|u|>0$. If assumptions (A1)-(A5) hold then \\ 
                   
 $\quad$ (i) we have the following decompositions of $\M$   
   
\begin{equation}   \label{eq:deritef}                            
\M(\bo{\x}) = \int_{\Omega_u} \M(\bo{\x}_{u}, \bo{\x}_{\sim u}) \, d\mu(\bo{\x}_{u})  + \sum_{\substack{v,\, v \subseteq u \\ |v|>0}}
\gtef_v(\bo{\x})  \, , 
\end{equation}               
and       
\begin{eqnarray}   \label{eq:deriaov}                                
\M(\bo{\x})  & = & \M_{\emptyset} +  \sum_{\substack{v,\, v \subseteq \{1,\, 2,\, \ldots,\, d\} \\ |v|>0}}     
\gtef_v(\bo{\x})  \, , 
\end{eqnarray}             
where 
$$ 
\M_{\emptyset} =\int_{\Omega} \M(\bo{x}) \, d\mu(\bo{x})\, , \;  
\gtef_v(\bo{\x}) = \int_{\Omega_v} 
 \frac{\partial^{|v|} \M}{\partial \bo{\x}_{v}}\left(\bo{\x}'_{v},\bo{\x}_{\sim v}\right)  \prod_{j \in v} \frac{F_{j}(\x_{j}') - \indic_{[\x_{j}' \geq \x_{j}]}}{\rho_{j} (\x_{j}') }  \, d\mu(\bo{\x}'_v)\, ,
$$  
with          
$$ \frac{\partial^{|v|}}{\partial \bo{\x}_{v}} :=\prod_{j \in v} \frac{\partial }{\partial x_{j}} \, \text{and} \,  \indic_{[\x_{j}' \geq \x_{j}]} :=\left\{ \begin{array}{cc} 1 & \text{if} \, \x_{j}' \geq \x_{j} \\ 0 & \text{otherwise} \end{array} \right. \, .      
$$                   
         
$\quad$ (ii) The components  
$\gtef_v(\bo{\x}),\, v \subseteq \{1,\, 2,\, \ldots,\, d\}$ with $|v|>0$ in (\ref{eq:deritef}-\ref{eq:deriaov}) satisfy     
\begin{equation} \label{eq:center}        
\int_{\Omega_j}  \gtef_v(\bo{\x}) \, d\mu_j(x_j) =\bo{0} \; \forall \; j\in v \, .   
\end{equation}                      

$\quad$ (iii) The variance of $\gtef_v$, that is, $\var(\gtef_v)$ is given by  
\begin{eqnarray}   \label{eq:decvtief}                                         
\var(\gtef_v)  & = &             
\int_{\Omega\times \Omega_v} 
\frac{\partial^{|v|} \M}{\partial \bo{\x}_{v}}\left(\bo{\x}\right) 
\frac{\partial^{|v|} \M^\T}{\partial \bo{\x}_{v}}\left(\bo{\x}'_{v} ,\bo{\x}_{\sim v}\right) 
\prod_{k \in v} \frac{F_{k}\left[\min(\x_{k},\,\x_{k}')\right] -F_{k}(\x_{k})F_{k}(\x_{k}')}{\rho_{k} (\x_{k})\rho_{k} (\x_{k}')}  \, \nonumber \\ 
 &  &    \times d\mu(\bo{\x}) d\mu(\bo{\x}_v')  \,  .
\end{eqnarray}                  

$\qquad (iv)$ The variance of $\M$, that is, $\var(\M)$ is given by           
\begin{eqnarray}   \label{eq:decvm}                                    
\var(\M)  & = &  
\sum_{\substack{v,\, v \subseteq \{1,\,2,\, \ldots,\, d\} \\ |v|>0}}  \var(\gtef_v)    
  +           
\sum_{\substack{v,\, v \subseteq \{1,\,\ldots,\, d\} \\ |v|>0}} 
\sum_{\substack{w,\, w \subseteq \{1,\,\ldots,\, d\} \\ w \neq v \\ |w|>0}} \int_{\Omega \times \Omega_v\times \Omega_w} 
\frac{\partial^{|v|} \M}{\partial \bo{\x}_{v}}\left(\bo{\x}'_{v},\bo{\x}_{\sim v}\right) 
\frac{\partial^{|w|} \M^\T}{\partial \bo{\x}_{w}}\left(\bo{\z}'_{w},\bo{\x}_{\sim w}\right)   \nonumber \\    
& & \times   
\prod_{j \in v} \frac{F_{j}(\x_{j}') - \indic_{[\x_{j}' \geq \x_{j}]}}{\rho_{j} (\x_{j}') }
 \prod_{k \in w} \frac{F_{k}(\z_{k}') - \indic_{[\z_{k}' \geq \x_{k}]}}{\rho_{k} (\z_{k}') }  \, d\mu(\bo{\x}'_v)d\mu(\bo{\z}'_w)d\mu(\bo{\x})   
 \, . \nonumber \\                            
\end{eqnarray}       
\end{theorem}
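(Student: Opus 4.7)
My plan is to establish a one-dimensional identity and iterate it, lifting every claim to the multivariate setting. The base identity, obtained from integration by parts, is
$$g(x) - \int_{\Omega_j} g(t)\,d\mu_j(t) \;=\; \int_{\Omega_j} g'(t)\,\frac{F_j(t)-\indic_{[t\geq x]}}{\rho_j(t)}\,d\mu_j(t)$$
for any $C^1$ function $g:\R\to\R$ with finite second moment; it reduces to $\int g'(t)F_j(t)\,dt = g(+\infty)-\esp[g]$ and $\int g'(t)\indic_{[t\geq x]}\,dt = g(+\infty)-g(x)$, which combine to give $g(x)-\esp[g]$. Writing $E_jh(\bo{x}):=\int h(\bo{x}'_j,\bo{x}_{\sim j})\,d\mu_j(x'_j)$, this says $\gtef_{\{j\}}=(I-E_j)\M$. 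For part (i), I would iterate on each variable of $u$: applying the identity at $x_{j_1}$ gives $\M=E_{j_1}\M+\gtef_{\{j_1\}}$; applying it to $E_{j_1}\M$ at $x_{j_2}$ introduces a term involving $\esp_{X_{j_1}}[\partial_{x_{j_2}}\M]$, which I rewrite by invoking the base identity again on $x_{j_1}\mapsto\partial_{x_{j_2}}\M$ to convert the conditional expectation into a pointwise value at $x_{j_1}$ plus a $\gtef_{\{j_1,j_2\}}$-type correction. Induction on $|u|$ with careful bookkeeping by the subset $v\subseteq u$ of variables on which derivatives have been taken yields (\ref{eq:deritef}); specializing $u=\{1,\ldots,d\}$ gives (\ref{eq:deriaov}) with $\M_\emptyset=\esp[\M]$.

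Part (ii) is a one-line Fubini: for $j\in v$, the factor $\frac{F_j(x_j')-\indic_{[x_j'\geq x_j]}}{\rho_j(x_j')}$ is the only piece of $\gtef_v$ depending on $x_j$, and $\int_{\Omega_j}(F_j(x_j')-\indic_{[x_j'\geq x_j]})\,d\mu_j(x_j)=F_j(x_j')-F_j(x_j')=0$. Part (iii) follows because (ii) yields $\esp[\gtef_v]=\bo{0}$ and hence $\var(\gtef_v)=\int\gtef_v\gtef_v^\T\,d\mu$; writing the two copies of $\gtef_v$ with independent $v$-slices $\bo{x}'_v$ and $\bo{x}''_v$, Fubini lets me pull the $\bo{x}_v$-integration inside, and for each $j\in v$ the key kernel evaluation
$$\int_{\Omega_j}\bigl(F_j(x_j')-\indic_{[x_j'\geq x_j]}\bigr)\bigl(F_j(x_j'')-\indic_{[x_j''\geq x_j]}\bigr)\,d\mu_j(x_j) = F_j(\min(x_j',x_j''))-F_j(x_j')F_j(x_j'')$$
(obtained by expanding the product and using $\int\indic_{[x_j'\geq x_j]}\indic_{[x_j''\geq x_j]}\,d\mu_j(x_j)=F_j(\min(x_j',x_j''))$) produces the stated formula after relabelling $\bo{x}''_v\to\bo{x}_v$. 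Part (iv) is then immediate from (i) and (iii): write $\M-\M_\emptyset=\sum_v\gtef_v$, expand $\var(\M)=\esp[(\M-\M_\emptyset)(\M-\M_\emptyset)^\T]$ as a double sum over $(v,w)$, identify the diagonal $v=w$ contribution with $\var(\gtef_v)$, and put the off-diagonal terms $\esp[\gtef_v\gtef_w^\T]$ in closed integral form by substituting the definitions of $\gtef_v$ and $\gtef_w$ and swapping the integration orders.

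The hard part will be the inductive step in (i): each application of the base identity to a conditional expectation produces a new conditional expectation of a higher-order derivative that must itself be re-expanded, and the terms indexed by different subsets $v$ must accumulate with precisely the arguments $(\bo{x}'_v,\bo{x}_{\sim v})$ rather than nested expectations over $\bo{X}_{u\setminus v}$; the signs and multiplicities generated by the successive telescopings require particular care. Assumptions (A3)--(A4) on the existence and square-integrability of all cross-partial derivatives are what legitimise the differentiations under the integral and the repeated exchanges of integration order used throughout.
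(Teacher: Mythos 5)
Your one-dimensional base identity, the centering argument in (ii), the kernel evaluation $\int_{\Omega_j}\bigl(F_j(x_j')-\indic_{[x_j'\geq x_j]}\bigr)\bigl(F_j(x_j'')-\indic_{[x_j''\geq x_j]}\bigr)\,d\mu_j(x_j)=F_j(\min(x_j',x_j''))-F_j(x_j')F_j(x_j'')$ in (iii), and the expansion of the double sum in (iv) are all correct and coincide with the computations in the paper's appendix. The problem is part (i): the induction you defer as ``the hard part'' is exactly where the content lies, and it does not close in the form you describe. Carry out your own scheme for $u=\{1,2\}$. Writing $K_j(t,x_j)=(F_j(t)-\indic_{[t\geq x_j]})/\rho_j(t)$, the first application gives $\M=E_1\M+\gtef_{\{1\}}$; applying the base identity to $E_1\M$ in $x_2$ produces $\int E_1[\partial_2\M](x_2')K_2\,d\mu_2$, and re-expanding $E_1[\partial_2\M](x_2',\cdot)=\partial_2\M(x_1,x_2',\cdot)-\int\partial_1\partial_2\M(x_1',x_2',\cdot)K_1\,d\mu_1$ yields $\M=E_{\{1,2\}}\M+\gtef_{\{1\}}+\gtef_{\{2\}}-\gtef_{\{1,2\}}$: the higher-order correction enters with sign $(-1)^{|v|+1}$, not $+1$. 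A direct check with $\M(x_1,x_2)=x_1x_2$ on $[0,1]^2$ with uniform marginals confirms this: $\gtef_{\{1\}}=x_2(x_1-\tfrac12)$, $\gtef_{\{2\}}=x_1(x_2-\tfrac12)$, $\gtef_{\{1,2\}}=(x_1-\tfrac12)(x_2-\tfrac12)$, and $\tfrac14+\gtef_{\{1\}}+\gtef_{\{2\}}+\gtef_{\{1,2\}}=3x_1x_2-x_1-x_2+\tfrac12\neq x_1x_2$, whereas the alternating-sign sum does recover $x_1x_2$. So your induction, done correctly, proves a signed version of (\ref{eq:deritef})--(\ref{eq:deriaov}) rather than the displayed identity; you must either track the factors $(-1)^{|v|+1}$ throughout (which then propagates into the cross terms of part (iv)), or redefine the components with the coordinates in $u\setminus v$ averaged out, in which case you land on the Sobol--Hoeffding components of Theorem \ref{theo:deriaov1} instead of the $\gtef_v$ of the statement. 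As written, the proposal does not establish (i).

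For comparison, the paper does not iterate a one-dimensional identity: it starts from the integral total-differential (telescoping) identity (\ref{eq:ited}), multiplies by $\rho_u(\bo{\y}_u)$, integrates over $\Omega_u$, and converts each iterated line integral into a weighted integral by two successive changes of variables, producing the kernel $K_j$ in one stroke. The delicate point in that route is the placement of the non-integrated coordinates of $u\setminus v$ in the telescoping identity before the $\bo{\y}$-average is taken --- precisely the same bookkeeping your inductive step must resolve. Whichever route you take, you need to confront that step explicitly rather than defer it, because it is where the signs and the arguments $(\bo{\x}_v',\bo{\x}_{\sim v})$ are actually determined.
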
                                              
\begin{proof}         
See Appendix \ref{app:deriaov}.                                                     
\end{proof}                   
                                  
\begin{rem} On assumption (A3). \\ 
 The derivative-based expressions of $\M$ in Theorem  \ref{lem:deriaov} are still suitable for functions that are continuous but differentiable almost everywhere w.r.t. $\bo{x}_v$ with $v \subseteq \{1,\, \ldots,\ d\}$ .       
\end{rem}         

Theorem \ref{lem:deriaov} gives a full decomposition of a function $\M$ by making use of the gradient and/or cross-partial derivatives of $\M$, CDFs, and PDFs. The decomposition in (\ref{eq:deritef}-\ref{eq:deriaov}) expands the function $\M$  as a sum of components $\gtef_v,\, v \subseteq \{1,\, \ldots,\ d\}$ that allows for assessing the total-interaction effect of $\bo{\X}_v$ (see Section \ref{sec:msa}). Thus, the component $\gtef_v$ in (\ref{eq:deritef}-\ref{eq:deriaov}) can be directly used to assess  either  the overall contribution of $\X_j$ or the overall contribution of interaction between $\bo{\X}_v$, with $|v|>1$ and other inputs over the whole outputs. While the decomposition of $\M$ in Theorem \ref{lem:deriaov} focuses on the total-interaction effect functionals, Theorem \ref{theo:deriaov1} gives the functional ANOVA-type decomposition of $\M$, which allows for easily quantifying the single contribution of input factors and interactions.                       
           
\begin{theorem}        \label{theo:deriaov1}
Let $\M: \R^d \to \R^\NN$ denote a  deterministic function, $\bo{X}$ be $d$ input factors, $F_j$ (resp. $\rho_j$) be the CDF (resp. the PDF) of $X_j$ with $j\in \{1,\, \ldots,\, d\}$, and $\bo{x}$ be a sample value of $\bo{X}$. If assumptions (A1)-(A5)  hold then the derivative-based functional ANOVA of $\M$ is given as follows.\\          
 
$\qquad (i)$ The decomposition of $\M$ is given by  
\begin{equation}      \label{eq:dbfanova} 
 \M(\bo{\x}) =   \M_\emptyset + \sum_{j=1}^d \M_j(\x_j)+
\sum_{\substack{u,\,u \subseteq \{1,2, \ldots d\}\\|u|>1}} \M_u(\bo{\x}_u)\, , 
\end{equation} 
where              
$$      
\M_{\emptyset} =\int_{\Omega} \M(\bo{x}) \, d\mu(\bo{x})\, , 
\qquad 
\M_j(\x_j) = \int_{\Omega} \frac{\partial \M}{\partial \x_{j}}\left(\bo{\x}'\right) 
 \frac{F_{j}(\x_{j}') - \indic_{[\x_{j}' \geq \x_{j}]}}{\rho_{j} (\x_{j}') }  \, d\mu(\bo{\x}') \, ,  
$$
and   
$$              
\M_u(\bo{\x}_u)  =  \sum_{w,\, w\subseteq u} \sum_{\substack{v,\, v \subseteq w \\|v|>0}} (-1)^{|u|-|w|}  \int_{\Omega}   
 \frac{\partial^{|v|} \M}{\partial \bo{\x}_{v}}\left(\bo{\x}'_{v}, \bo{\x}_{w \backslash v}, \bo{\x}_{\sim w}'\right)  \prod_{j \in v} \frac{F_{j}(\x_{j}') - \indic_{[\x_{j}' \geq \x_{j}]}}{\rho_{j} (\x_{j}') }  \, d\mu(\bo{\x}') \, . 
$$        
$\qquad (ii)$ The elements $\M_u(\bo{\x}_u),\, u \subseteq \{1,\,2,\,\ldots,\, d\}$ with $|u|> 0$ are centered and mutually orthogonal, that is, 
\begin{equation}
\int_{\Omega} \M_u(\bo{\x}_u) \,  d\mu_j(\x_j)  =\bo{0} \, \forall \,  j \in u \, , 
\end{equation}        
and for $u_1,\, u_2 \subseteq \{1,\, \ldots, \, d\}$ such that $u_1 \neq u_2$,
\begin{equation} 
\int_{\Omega} \M_{u_1}(\bo{\x}_{u_1}) \M_{u_2}^\T(\bo{\x}_{u_2}) \,   d\mu(\bo{\x}) =\mathcal{O} \, ,             
\end{equation}    
with $\mathcal{O}$ a null matrix.     \\    

$\qquad (iii)$ The decomposition of the variance of $\M$ is given by
\begin{equation} \label{eq:vardec}
\var(\M) = \sum_{\substack{u, \, u \subseteq \{1,\, \ldots,\, d\}\\ |u|>0}} \var(\M_u)\, . 
\end{equation}    
\end{theorem}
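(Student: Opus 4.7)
The plan is to obtain Theorem \ref{theo:deriaov1} as a consequence of Theorem \ref{lem:deriaov} combined with the uniqueness of the Sobol-Hoeffding functional ANOVA decomposition. Under assumption (A1), any square-integrable $\M$ admits a unique decomposition $\M(\bo{\x}) = \sum_{u \subseteq \{1,\ldots,d\}} \M_u(\bo{\x}_u)$ with $\M_u$ depending only on $\bo{\x}_u$ and centered in every $x_j$ for $j \in u$, and its components are given by the Möbius inversion
$$
\M_u(\bo{\x}_u) = \sum_{w \subseteq u} (-1)^{|u|-|w|} \int_{\Omega_{\sim w}} \M(\bo{\x}_w, \bo{\x}'_{\sim w}) \, d\mu(\bo{\x}'_{\sim w}) \, .
$$
My task is then to inject the derivative-based expression from Theorem~\ref{lem:deriaov} into this formula and simplify.

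For part (i), I would start from Equation (\ref{eq:deriaov}) and take the conditional expectation $\esp[\M(\bo{\X}) \mid \bo{\X}_w = \bo{\x}_w]$, which amounts to integrating each $\gtef_v$ over $d\mu(\bo{\x}'_{\sim w})$. The key observation is that for any $v$ with $v \cap \bar{w} \neq \emptyset$, picking some $j \in v \cap \bar{w}$ and integrating against $d\mu_j(x_j')$ annihilates $\gtef_v$ by the centering property (\ref{eq:center}) of Theorem \ref{lem:deriaov}. Hence only the terms with $v \subseteq w$ survive, yielding
$$
\int_{\Omega_{\sim w}} \M(\bo{\x}_w, \bo{\x}'_{\sim w}) \, d\mu(\bo{\x}'_{\sim w}) = \M_\emptyset + \sum_{\substack{v \subseteq w \\ |v|>0}} \int_{\Omega} \frac{\partial^{|v|}\M}{\partial \bo{\x}_v}\!\left(\bo{\x}'_v, \bo{\x}_{w\setminus v}, \bo{\x}'_{\sim w}\right) \prod_{j \in v}\frac{F_j(x_j')-\indic_{[x_j' \geq x_j]}}{\rho_j(x_j')} \, d\mu(\bo{\x}') \, .
$$
Plugging this into the Möbius formula above (and noting that $\sum_{w \subseteq u} (-1)^{|u|-|w|} = 0$ for $|u|>0$, which kills the $\M_\emptyset$ contribution when $u \neq \emptyset$) gives exactly the stated expression for $\M_u$. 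The case $u = \{j\}$ reduces the double sum to a single term and recovers the formula for $\M_j(x_j)$.

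For parts (ii) and (iii), I would invoke the classical properties of the Sobol-Hoeffding decomposition. The centering $\int \M_u \, d\mu_j = \bo{0}$ for $j \in u$ can be derived either by applying Fubini directly to the Möbius formula (using $\sum_{w \subseteq u \setminus \{j\}} (-1)^{|u|-|w|}((-1)+1)=0$) or simply from uniqueness. Orthogonality of $\M_{u_1}$ and $\M_{u_2}$ for $u_1 \neq u_2$ follows by choosing $j \in u_1 \triangle u_2$, using assumption (A1) to factorize the measure, and integrating first in $x_j$. Finally, Equation (\ref{eq:vardec}) is the Pythagorean identity obtained by expanding $\var(\M) = \esp[(\M-\M_\emptyset)(\M-\M_\emptyset)^\T]$ and discarding the cross terms.

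The main obstacle is bookkeeping rather than substance: one must carefully track the partition of $\bo{\x}'$ into the $v$, $w\setminus v$ and $\bar{w}$ blocks when substituting the derivative expression for $\gtef_v$ under conditioning on $\bo{\X}_w$, and verify that after Möbius inversion the resulting double sum agrees with the stated formula. Once the substitution is done correctly, the rest of the argument reduces to standard ANOVA manipulations.
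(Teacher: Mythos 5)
Your proposal is correct and follows essentially the same route as the paper: both start from the Sobol--Hoeffding components written via M\"obius inversion, substitute the derivative-based decomposition (\ref{eq:deriaov}), use the centering property (\ref{eq:center}) to discard every $\gtef_v$ with $v \cap \bar{w} \neq \emptyset$ so that only $v \subseteq w$ survives, and then defer Points (ii) and (iii) to the standard orthogonality and Pythagorean properties of the Hoeffding decomposition. The bookkeeping you flag (splitting $\bo{\x}'$ into the $v$, $w\setminus v$ and $\sim w$ blocks) is exactly the chain of substitutions carried out in the paper's Appendix B.
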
    
\begin{proof}            
 See  Appendix \ref{app:deriaov1}.                  
\end{proof}             
 
\begin{rem} \label{rem:var}  
Theorems  \ref{lem:deriaov}-\ref{theo:deriaov1} give an integral equality between the variance of $\M$ and a sum of the variances of increasing dimension functions that involve the cross-partial derivatives of $\M$. For a function $\M : \R \to \R^\NN$, both decompositions are equal to 
\begin{equation}  
\M(\x) = \M_\emptyset + \tefg_x(\x)\, ,  
\end{equation}   
with  
$$ \tefg_x(\x) =  \int_{\Omega} \frac{\partial \M}{\partial \x}\left(\x'\right) 
 \frac{F(\x') - \indic_{[\x' \geq \x]}}{\rho_{j} (\x') }  \, d\mu(\x') \, , 
$$           
  and        
\begin{equation} \label{eq:seqvar} 
 \var\left(\M\right) =\int_{\Omega \times \Omega}  
\frac{\partial \M}{\partial \x} \left(\x\right) \frac{\partial \M^\T}{\partial \x} \left(\x'\right)  
 \frac{F\left(\min(\x,\, \x'\right) - F(\x)F(\x')}{\rho(\x) \rho(\x')}
 \,  d\mu(\x) d\mu(\x') \, .  
\end{equation}    
 
It is to be noted that for a function $\M : \R \to \R^\NN$, we have 
$\var(\M) = \var(\tefg_x) $ and $\frac{\partial \M}{\partial \x} \left(\x\right) =\frac{\partial \tefg_x}{\partial \x} \left(\x\right)$.
\end{rem}                    
   
\subsection{Integral equality and new weighted Poincar\'e-tyoe  inequalities} \label{sec:wpc} 
First, this section aims to extend the simple form of the integral equality in  (\ref{eq:seqvar}) to a given class of functions defined on $\R^d$ with $ d\geq 1$ such as the total-interaction effect function used in sensitivity analysis. It establishes an equality relationship between the variance of such functions and the integral of the square of their cross-partial derivatives. Second, we provide new Poincar\'e-type  inequalities, and third, we give new integral inequalities for any $d$-dimension function.    
 
\begin{defi} \label{defi:tief}    
Let $\Set := \left\{\M : \R^d \to \R^\NN : \M \, \text{satisfies assumptions (A1)-(A5)} \, \right\}$  denote the class of all functions satisfying assumptions (A1)-(A5). For a given $\M \in \Set$ and $\forall \, u \subseteq \{1,\, \ldots,\, d\}$ with  $|u|>0$, we define
 
$\quad$ (i)  the total-effect functional by (\cite{lamboni16,lamboni18})
\begin{equation}  \label{eq:deftef}      
\tef_u(\bo{\x}) :=\M(\bo{\x})- \int_{\Omega_u} \M(\bo{\x}) \, d\mu(\bo{x}_u) \, ;          
\end{equation}            
             
$\quad$ (ii) the class of functions $\mathcal{T}^{u, d}$ as follows: 

\begin{equation}  \label{eq:deftiefc0}                     
\mathcal{T}^{u,d} :=         
\left\{ \begin{array}{l}    
\tefg : \R^d \to \R^\NN \, \text{given by} \, \tefg (\bo{\x}) = 
 \pm \int_{\Omega_u} \frac{\partial^{|u|} \M}{\partial \bo{\x}_{u}}\left(\bo{\x}'_{u},\bo{\x}_{\sim u}\right)  \prod_{j \in u} \frac{F_{j}(\x_{j}') - \indic_{[\x_{j}' \geq \x_{j}]}}{\rho_{j} (\x_{j}') }  \, d\mu(\bo{\x}'_{u})
+ \boldsymbol{\alpha} : \\
 \M \in \Set , \, \boldsymbol{\alpha} \in \R^\NN \, \text{and}\, \NN \in \N^*  
\end{array}   \right\}   \, .            
\end{equation}                           
\end{defi}                   
                          
From Definition \ref{defi:tief}, we can derive the following properties of $\tef_u$ and $\mathcal{T}^{u,d}$. First, the functional $\tef_u : \R^d \to \R^\NN$ in (\ref{eq:deftef}) is differentiable w.r.t. $\bo{x}_v, \, \forall \, v \subseteq u$ and we have       
\begin{equation} \label{eq:proptef}  
\frac{\partial^{|v|}\tef_u}{\partial \bo{\x}_{v}}\left(\bo{\x}\right) =
\frac{\partial^{|v|}\M}{\partial \bo{\x}_{v}}\left(\bo{\x}\right) \; \forall \, v \subseteq u \,  ,   
\end{equation}        
as the second term of $\tef_u$ is a function of $\bo{x}_{\sim u}$, only.  We also have $\int_\Omega \tef_u \, d\mu =\bo{0}$. \\     
      
Second, it is to be noted that the class of functions $\mathcal{T}^{u, d}$ is not empty. For instance, let consider a function $\M_0 \in \Set$ given by $\M_0(\bo{x}) = \prod_{j=1}^d \left[\bo{a}_j F_j(x_j) + \bo{b}_j \right]$ with $\bo{a}_j,\, \bo{b}_j \in \R^\NN$. We can see that $\mathcal{T}^{u, d}$ contains a function $\tefg_0 : \R^d \to \R^\NN$ given by    
\begin{equation} \label{eq:ex1}  
\tefg_0(\bo{x}) = \prod_{j \in \bar{u}} \left[\bo{a}_j F_j(x_j) + \bo{b}_j \right] \prod_{j \in u} \left[\bo{a}_j F_j(x_j) - \frac{\bo{a}_j}{2} \right] \, .               
\end{equation} 
   
Also, Lemma \ref{lem:ex} shows that a function $\M_u^{sup} : \R^d \to \R^\NN$ given by 
\begin{equation} \label{eq:ex2}   
\M_u^{sup} (\bo{\x})=                     
 \sum_{\substack{v,\, v \subset u}}  
 (-1)^{|u|-|v| +1} \tef_{u \backslash v}(\bo{\x}) +\boldsymbol{\alpha} \, , 
\end{equation} 
belongs to $\mathcal{T}^{u, d}$ by deriving another expression of $\M_u^{sup}$. 
      
\begin{lemma} \label{lem:ex}
 Let $\M \in \Set$, $\bo{X}$ be $d$ input factors, $F_j$ (resp. $\rho_j$) be the CDF (resp. the PDF) of $X_j$ with $j\in \{1,\, \ldots,\, d\}$, and $\bo{x}$ be a sample value of $\bo{X}$. If assumptions (A1)-(A5)  hold then \\ 

$\quad$ (i) the function defined in (\ref{eq:ex2}) (i.e., $\M_u^{sup}$) is also given by
\begin{eqnarray}  \label{eq:tief10}  
 \M_u^{sup}(\bo{\x}) &=& (-1)^{|u|+1} \int_{\Omega_u}
\frac{\partial^{|u|} \M}{\partial \bo{\x}_{u}}\left(\bo{\x}'_{u},\bo{\x}_{\sim u}\right) 
 \prod_{j \in u} \frac{F_{j}(\x_{j}') - \indic_{[\x_{j}' \geq \x_{j}]}}{\rho_{j} (\x_{j}') }  \, d\mu(\bo{\x}'_{u}) + \boldsymbol{\alpha}  \, .                   
\end{eqnarray}       

$\quad$ (ii) The function $\M_u^{sup}(\bo{\x})$ is differentiable w.r.t. $\bo{\x}_u$, and we have the following relationships: 
                      
\begin{equation}     \label{eq:tieffderiv}    
\frac{\partial^{|u|}\M_u^{sup}}{\partial \bo{\x}_{u}}\left(\bo{\x}\right)
=\frac{\partial^{|u|}\M}{\partial \bo{\x}_{u}}\left(\bo{\x}\right) \, , 
\end{equation}     	 
 and                               
\begin{eqnarray}  \label{eq:tiefder}  
 \M_u^{sup}(\bo{\x}) &=& (-1)^{|u|+1} \int_{\Omega_u}
\frac{\partial^{|u|} \M_u^{sup}}{\partial \bo{\x}_{u}}\left(\bo{\x}'_{u},\bo{\x}_{\sim u}\right) 
 \prod_{j \in u} \frac{F_{j}(\x_{j}') - \indic_{[\x_{j}' \geq \x_{j}]}}{\rho_{j} (\x_{j}') }  \, d\mu(\bo{\x}'_{u}) + \boldsymbol{\alpha}  \, .                   
\end{eqnarray}                              
\end{lemma}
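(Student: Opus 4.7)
For part (i), I re-index the sum in (\ref{eq:ex2}) by setting $w := u \setminus v$. Since $v \subset u$ (proper), the condition becomes $\emptyset \neq w \subseteq u$, and the sign $(-1)^{|u|-|v|+1}$ becomes $(-1)^{|w|+1}$, yielding
\begin{equation*}
\M_u^{sup}(\bo{\x}) = \sum_{\substack{w \subseteq u \\ w \neq \emptyset}} (-1)^{|w|+1}\, \tef_w(\bo{\x}) + \boldsymbol{\alpha}.
\end{equation*}
Applying (\ref{eq:deritef}) from Theorem \ref{lem:deriaov} with $w$ in place of $u$ gives $\tef_w(\bo{\x}) = \sum_{\emptyset \neq v \subseteq w} \gtef_v(\bo{\x})$. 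Substituting and interchanging the order of summation, each $\gtef_v(\bo{\x})$ carries the combinatorial factor $\sum_{w \colon v \subseteq w \subseteq u}(-1)^{|w|+1}$. Writing $s := w \setminus v \subseteq u \setminus v$, this factor becomes $(-1)^{|v|+1}\sum_{s \subseteq u \setminus v}(-1)^{|s|}$, which vanishes unless $u \setminus v = \emptyset$ by the standard binomial identity. Only the term $v = u$ survives, giving $\M_u^{sup}(\bo{\x}) = (-1)^{|u|+1}\gtef_u(\bo{\x}) + \boldsymbol{\alpha}$, which is exactly (\ref{eq:tief10}).

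For part (ii), I differentiate the representation from (i) under the integral sign. Using $d\mu_j(x_j') = \rho_j(x_j')\,dx_j'$, the weight $[F_j(x_j') - \indic_{[x_j' \geq x_j]}]/\rho_j(x_j')$ times $d\mu_j(x_j')$ reduces to $[F_j(x_j') - \indic_{[x_j' \geq x_j]}]\,dx_j'$, so only the indicators depend on $\bo{\x}_u$. For each $k \in u$, the key single-variable identity
\begin{equation*}
\frac{\partial}{\partial x_k}\int_{\Omega_k}\phi(x_k')\bigl[F_k(x_k') - \indic_{[x_k' \geq x_k]}\bigr]\,dx_k' \;=\; \phi(x_k)
\end{equation*}
follows from the fundamental theorem of calculus after rewriting $\int \phi\,\indic_{[x_k'\geq x_k]}\,dx_k'$ as $\int_{x_k}^{\sup \Omega_k}\phi(x_k')\,dx_k'$. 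Iterating this across all $k \in u$, with Fubini justified by assumptions (A3)--(A5), reduces $\frac{\partial^{|u|}\gtef_u}{\partial \bo{\x}_u}(\bo{\x})$ to $\frac{\partial^{|u|}\M}{\partial \bo{\x}_u}(\bo{\x})$, which combined with the formula from part (i) gives (\ref{eq:tieffderiv}). Substituting (\ref{eq:tieffderiv}) back into the representation (\ref{eq:tief10}) immediately yields (\ref{eq:tiefder}).

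I expect the main obstacle to be the careful sign tracking and inclusion--exclusion collapse in part (i); identifying the correct interpretation of $v \subset u$ and re-expressing everything in terms of $\gtef_v$ rather than $\tef_w$ is what makes the cancellations fall out cleanly. Part (ii) is technically less delicate because each indicator appears in exactly one integration variable and is integrated against a smooth weight, so distributional derivatives can be avoided in favour of a direct fundamental-theorem-of-calculus argument; the only non-trivial analytic input is the $L^2$ bound from (A4), which allows the $|u|$ partial derivatives to be pulled past the iterated Fubini integrations.
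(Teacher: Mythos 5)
Your part (i) is essentially the paper's own proof: the paper also substitutes the expansion $\tef_{u\setminus v}=\sum_{\emptyset\neq v_1\subseteq u\setminus v}\gtef_{v_1}$ (Theorem \ref{lem:deriaov}(i)) into (\ref{eq:ex2}), interchanges the sums, and collapses the coefficient of $\gtef_{v_1}$ via $\sum_{t\,:\,v_1\subseteq t\subseteq u}(-1)^{|t|+1}=(-1)^{|u|+1}\indic_{[v_1=u]}$; your re-indexing by $w=u\setminus v$ is the same computation.

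Part (ii) is where you diverge from the paper, and your route as written has a sign gap. Your single-variable identity
\begin{equation*}
\frac{\partial}{\partial x_k}\int_{\Omega_k}\phi(x_k')\bigl[F_k(x_k')-\indic_{[x_k'\geq x_k]}\bigr]\,dx_k'=+\,\phi(x_k)
\end{equation*}
is correct, and iterating it gives $\frac{\partial^{|u|}\gtef_u}{\partial \bo{\x}_u}=\frac{\partial^{|u|}\M}{\partial \bo{\x}_u}$ for the \emph{bare} integral $\gtef_u$. But the representation you obtained in part (i) is $\M_u^{sup}=(-1)^{|u|+1}\gtef_u+\boldsymbol{\alpha}$, so differentiating it yields
\begin{equation*}
\frac{\partial^{|u|}\M_u^{sup}}{\partial \bo{\x}_u}=(-1)^{|u|+1}\,\frac{\partial^{|u|}\M}{\partial \bo{\x}_u}\,,
\end{equation*}
which agrees with the target (\ref{eq:tieffderiv}) only when $|u|$ is odd; the prefactor $(-1)^{|u|+1}$ cannot simply be dropped in the step ``which combined with the formula from part (i) gives (\ref{eq:tieffderiv})''. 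The paper proves (\ref{eq:tieffderiv}) by a different and sign-safe route: it differentiates the defining alternating sum (\ref{eq:ex2}) term by term, observes that $\frac{\partial^{|u|}\tef_{u\setminus v}}{\partial \bo{\x}_u}=\frac{\partial^{|u|}\M}{\partial \bo{\x}_u}$ for every proper $v\subset u$ (the subtracted conditional mean is killed because it does not depend on $\bo{\x}_{u\setminus v}$ and $u\setminus v\neq\emptyset$), and then uses $\sum_{v\subset u}(-1)^{|u|-|v|+1}=1$. You should either adopt that argument for (\ref{eq:tieffderiv}) or explain where the factor $(-1)^{|u|+1}$ disappears in yours. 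It is worth noting that your computation actually exposes a real tension between (\ref{eq:tief10}) and (\ref{eq:tieffderiv}) for even $|u|$ (test $\M_0(\bo{x})=\prod_j[\bo{a}_jF_j(x_j)+\bo{b}_j]$ with $|u|=2$: the alternating sum (\ref{eq:ex2}) gives $+\prod_{j\in u}[\bo{a}_jF_j(x_j)-\bo{a}_j/2]$ while (\ref{eq:tief10}) gives its negative), which traces back to a sign convention in the building blocks $\gtef_v$ of Theorem \ref{lem:deriaov}; but as a proof of the stated lemma, your part (ii) does not close.
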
 
\begin{proof} 
See Appendix \ref{app:ex2}.                
\end{proof}   

Moreover, we can see that $\forall\, \tefg_1,\, \tefg_2 \in \mathcal{T}^{u, d}$ and $\lambda \in \R$, we have the following properties:
 $$
 \int_\Omega \tefg_1 \, d\mu =\boldsymbol{\alpha} \, , 
$$
and 
$$ \tefg_1 +\lambda \tefg_2 \in \mathcal{T}^{u, d} \, .
$$
    
To derive the integral equality and inequality, it is important to note that the class of function $\mathcal{T}^{u, d}$ contains also functions that share the same derivatives with $\M$, that is, $ \exists \, \tefg \in \mathcal{T}^{u, d}$ such that
$$
\frac{\partial^{|u|}\tefg}{\partial \bo{\x}_{u}}\left(\bo{\x}\right)
=\frac{\partial^{|u|}\M}{\partial \bo{\x}_{u}}\left(\bo{\x}\right) \, . 
$$ 
In what follows, we use assumption (A6) for functions $h\in \mathcal{T}^{u, d}$ satisfying $\frac{\partial^{|u|}\tefg}{\partial \bo{\x}_{u}}\left(\bo{\x}\right)
=\frac{\partial^{|u|}\M}{\partial \bo{\x}_{u}}\left(\bo{\x}\right)$, and 
 Corollary \ref{coro:equa} gives some integral equalities.          
                       
\begin{corollary}        \label{coro:equa}
Let $\M \in \Set$ and $\tefg \in \mathcal{T}^{u, d}$ denote two functions, $\bo{X}$ be $d$ input factors, $F_j$ (resp. $\rho_j$) be the CDF (resp. the PDF) of $X_j$ with $j\in \{1,\, \ldots,\, d\}$, and $\bo{x}$ be a sample value of $\bo{X}$. If assumptions (A1)-(A5)  hold then \\ 

$\quad$ (i) for all  $\tefg \in \mathcal{T}^{u, d}$, we have 
 
\begin{eqnarray}   \label{eq:decvtief0f}                                         
\var(\tefg)  & = &             
\int_{\Omega\times \Omega_u} 
\frac{\partial^{|u|} \M}{\partial \bo{\x}_{u}}\left(\bo{\x}\right) 
\frac{\partial^{|u|} \M^\T}{\partial \bo{\x}_{u}}\left(\bo{\x}'_{u} ,\bo{\x}_{\sim u}\right) 
\prod_{k \in u} \frac{F_{k}\left[\min(\x_{k},\,\x_{k}')\right] -F_{k}(\x_{k})F_{k}(\x_{k}')}{\rho_{k} (\x_{k})\rho_{k} (\x_{k}')}  \, \nonumber \\ 
 &  &    \times         
d\mu(\bo{\x}) d\mu(\bo{\x}_u')          
\,  .    
\end{eqnarray}   

$\quad$ (ii) Moreover, if assumption (A6) holds (i.e., $  
\frac{\partial^{|u|}\tefg}{\partial \bo{\x}_{u}}\left(\bo{\x}\right)
=\frac{\partial^{|u|}\M}{\partial \bo{\x}_{u}}\left(\bo{\x}\right) $) then we have 
\begin{eqnarray}   \label{eq:decvtief1f}                                         
\var(\tefg)  & = &             
\int_{\Omega\times \Omega_u} 
\frac{\partial^{|u|} \tefg}{\partial \bo{\x}_{u}}\left(\bo{\x}\right) 
\frac{\partial^{|u|} \tefg^\T}{\partial \bo{\x}_{u}}\left(\bo{\x}'_{u} ,\bo{\x}_{\sim u}\right) 
\prod_{k \in u} \frac{F_{k}\left[\min(\x_{k},\,\x_{k}')\right] -F_{k}(\x_{k})F_{k}(\x_{k}')}{\rho_{k} (\x_{k})\rho_{k} (\x_{k}')}  \, \nonumber \\ 
 &  &    \times        
d\mu(\bo{\x}) d\mu(\bo{\x}_u')          
\,  .     
\end{eqnarray}   

$\quad$ (iii) If assumption (A6) and (A7) ($\frac{\partial^{|u|}\tefg}{\partial \bo{\x}_{u}}\left(\bo{\x}_u', \bo{\x}_{\sim u}\right)\rho_u\left( \bo{x}_u\right) = \frac{\partial^{|u|}\tefg}{\partial \bo{\x}_{u}}\left(\bo{\x}\right)\rho_u\left( \bo{x}_u'\right)$) hold then                       	 
\begin{eqnarray}   \label{eq:decvtief2f}                                         
\var(\tefg)  & =  & \frac{1}{2^{|u|}}                            
\int_{\Omega} 
\frac{\partial^{|u|} \tefg}{\partial \bo{\x}_{u}}\left(\bo{\x}\right) 
\frac{\partial^{|u|} \tefg^\T}{\partial \bo{\x}_{u}}\left(\bo{\x}\right) 
\prod_{k \in u} \frac{F_{k}(\x_{k})\left(1 -F_{k}(\x_{k})\right)}{\left[\rho_{k} (\x_{k})\right]^2}  \, d\mu(\bo{\x}) \,  , 
\end{eqnarray}            
\end{corollary}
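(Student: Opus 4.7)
The plan is to bootstrap all three assertions from the variance formula already established in Theorem~\ref{lem:deriaov}(iii).

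For part (i), I would first observe that by Definition~\ref{defi:tief}, every $\tefg\in\mathcal{T}^{u,d}$ has the form $\tefg(\bo{\x})=\pm\gtef_u(\bo{\x})+\boldsymbol{\alpha}$, where $\gtef_u$ is the total-interaction component defined in Theorem~\ref{lem:deriaov}. Since the variance is invariant under additive constants and under change of sign (as $(-M)(-M)^\T=MM^\T$ inside the matrix variance), we have $\var(\tefg)=\var(\gtef_u)$, and the claimed identity is exactly the statement of Theorem~\ref{lem:deriaov}(iii) applied to $u$. Part (ii) is then immediate: assumption (A6) gives $\partial^{|u|}\tefg/\partial\bo{\x}_u=\partial^{|u|}\M/\partial\bo{\x}_u$ pointwise, so one simply substitutes in (\ref{eq:decvtief0f}).

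For part (iii), the plan is to factorize and then tensorize. Let $g(\bo{\x}):=\partial^{|u|}\tefg/\partial\bo{\x}_u(\bo{\x})$. Assumption (A7) reads $g(\bo{\x}_u',\bo{\x}_{\sim u})\rho_u(\bo{x}_u)=g(\bo{\x})\rho_u(\bo{x}_u')$, i.e.\ $g(\bo{\x})/\rho_u(\bo{x}_u)$ depends only on $\bo{x}_{\sim u}$. Plugging $g(\bo{\x}_u',\bo{\x}_{\sim u})=\frac{\rho_u(\bo{x}_u')}{\rho_u(\bo{x}_u)}g(\bo{\x})$ into (\ref{eq:decvtief1f}) lets me pull the matrix $g(\bo{\x})g(\bo{\x})^\T$ out of the $\bo{x}_u'$-integral, giving
\[
\var(\tefg)=\int_\Omega g(\bo{\x})g(\bo{\x})^\T\Bigl[\tfrac{1}{\rho_u(\bo{x}_u)^2}\prod_{k\in u} J_k(x_k)\Bigr]d\mu(\bo{\x}),
\]
where, using independence to split the product into single-variable integrals,
\[
J_k(x_k)=\int_{\Omega_k}\rho_k(x_k')\bigl\{F_k[\min(x_k,x_k')]-F_k(x_k)F_k(x_k')\bigr\}\,dx_k'.
\]

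The key computational step, which I expect to be the only nontrivial bit, is evaluating $J_k$. I would split the integration range at $x_k'=x_k$ and use the substitution $t=F_k(x_k')$: the part $\int_{-\infty}^{x_k}\rho_k F_k\,dx_k'=\tfrac12 F_k(x_k)^2$, the middle term gives $F_k(x_k)(1-F_k(x_k))$, and subtracting $F_k(x_k)\cdot\tfrac12$ (the full integral of $\rho_k F_k$) collapses everything to $J_k(x_k)=\tfrac12 F_k(x_k)(1-F_k(x_k))$. Taking the product over $k\in u$ produces the factor $2^{-|u|}$ and, after absorbing $\rho_u(\bo{x}_u)^2=\prod_{k\in u}\rho_k(x_k)^2$ into the denominator, yields exactly (\ref{eq:decvtief2f}).

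The only real obstacle is bookkeeping: one must be careful that the measures $d\mu(\bo{\x})$ and $d\mu(\bo{\x}_u')$ contribute additional factors of $\rho_u(\bo{x}_u)$ and $\rho_u(\bo{x}_u')$ that combine cleanly with the ratio coming from (A7); once this is tracked, the computation of $J_k$ is elementary and the result follows.
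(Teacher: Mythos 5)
Your proposal is correct and takes essentially the same route as the paper's own proof in Appendix~\ref{app:equa}: parts (i)--(ii) are read off from Theorem~\ref{lem:deriaov}(iii) (you merely make explicit the invariance of the variance under the sign and the additive constant $\boldsymbol{\alpha}$, which the paper leaves implicit), and part (iii) uses (A7) to collapse the two-sample integrand to a one-sample one and then evaluates $\int_{\Omega_k}\left\{F_k\left[\min(x_k,x_k')\right]-F_k(x_k)F_k(x_k')\right\}d\mu_k(x_k')=\tfrac12 F_k(x_k)\left(1-F_k(x_k)\right)$, which is exactly the paper's computation.
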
               
\begin{proof}
See Appendix \ref{app:equa}.               
\end{proof}      
           
Remark that the function $\tefg_o$ defined in (\ref{eq:ex1}) satisfies assumptions (A6)-(A7), and therefore the integral equalities in (\ref{eq:decvtief1f}-\ref{eq:decvtief2f}) make sense. Equation (\ref{eq:decvtief2f}) makes use of the evaluations of the derivatives for only one sample of input values to compute the variance of $\tefg$. This property is practically interesting for functions or models ($\M$ or $\tefg$) that take a lot of time for one model run. For functions that do not satisfy assumption (A7), Equation (\ref{eq:decvtief1f}) can be used to compute the variance despite it requires the evaluations of the derivatives for two samples of input values.  For such functions, it is interesting to have upper bounds of their variances using only one sample evaluations of the derivatives, as a small value of an upper bound means  that the considered function is nearly a constant. Thus, Theorem \ref{theo:ineq} gives an integral inequality that involves both quantities. 
                  
\begin{theorem}        \label{theo:ineq}
Let $\tefg \in \mathcal{T}^{u, d}$ denote a function, $\bo{X}$ be $d$ input factors, $F_j$ (resp. $\rho_j$) be the CDF (resp. the PDF) of $X_j$ with $j\in \{1,\, \ldots,\, d\}$, and $\bo{x}$ be a sample value of $\bo{X}$. If assumptions (A1)-(A6)  hold then \\

$\quad$ (i) we have the following inequality:              
\begin{eqnarray}   \label{eq:Intief}                                          
\var(\tefg)  & \matleq  & \frac{1}{2^{|u|}}                            
\int_{\Omega} 
\frac{\partial^{|u|} \tefg}{\partial \bo{\x}_{u}}\left(\bo{\x}\right) 
\frac{\partial^{|u|} \tefg^\T}{\partial \bo{\x}_{u}}\left(\bo{\x}\right) 
\prod_{k \in u} \frac{F_{k}(\x_{k})\left(1 -F_{k}(\x_{k})\right)}{\left[\rho_{k} (\x_{k})\right]^2}  \, d\mu(\bo{\x}) \,  ,  
\end{eqnarray}       
where $\matleq$ is the symbol for the Loewner partial order on matrices, that is, $\Sigma_1 \matleq \Sigma_2$ if $\Sigma_2 -\Sigma_1$ is a positive semi-definite matrix. \\            
             
$\quad (ii) $ The equality holds for the function defined in (\ref{eq:ex1}), that is, 
\begin{equation}
\tefg_0(\bo{x}) = \prod_{j \in \bar{u}} \left[\bo{a}_j F_j(x_j) + \bo{b}_j \right] \prod_{j \in u} \left[\bo{a}_j F_j(x_j) - \frac{\bo{a}_j}{2} \right] \, .      
\end{equation}      
 where $\bo{a}_j, \, \bo{b}_j  \in \R^\NN$.                
\end{theorem}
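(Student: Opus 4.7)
The starting point is Corollary~\ref{coro:equa}(ii), which under (A6) identifies $\var(\tefg)$ with the kernel-weighted double integral of $\frac{\partial^{|u|}\tefg}{\partial\bo{x}_u}(\bo{x})\,\frac{\partial^{|u|}\tefg^\T}{\partial\bo{x}_u}(\bo{x}_u',\bo{x}_{\sim u})$. The Loewner inequality~(\ref{eq:Intief}) is equivalent to the scalar inequality obtained by sandwiching both sides with an arbitrary $\bo{y}\in\R^\NN$; setting $\phi(\bo{x}):=\bo{y}^\T\frac{\partial^{|u|}\tefg}{\partial\bo{x}_u}(\bo{x})$, holding $\bo{x}_{\sim u}$ fixed as a parameter, and changing variables $u_k=F_k(\x_k)$ (so each $d\mu_k$ becomes Lebesgue on $[0,1]$), all the $\rho_k$ factors cancel cleanly and the claim reduces to the weighted inequality
$$\iint \tilde\phi(\bo{u})\tilde\phi(\bo{v})\prod_{k\in u}[\min(u_k,v_k)-u_kv_k]\,d\bo{u}\,d\bo{v}\;\leq\; \frac{1}{2^{|u|}}\int \tilde\phi(\bo{u})^2\prod_{k\in u}u_k(1-u_k)\,d\bo{u} \qquad(\star)$$
on $L^2([0,1]^{|u|})$.

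The crux of the proof, and the main obstacle I anticipate, is the one-dimensional case of~$(\star)$ with the sharp constant $\tfrac{1}{2}$. Setting $\Psi(u):=\int_0^u\tilde\phi(t)\,dt$ and using $\min(u,v)=\int_0^1\indic_{\{t\leq \min(u,v)\}}\,dt$ together with Fubini, one obtains $\iint\tilde\phi(u)\tilde\phi(v)\min(u,v)\,du\,dv=\int_0^1[\Psi(1)-\Psi(t)]^2\,dt$; combined with the integration-by-parts identity $\int_0^1 u\tilde\phi(u)\,du=\Psi(1)-\int_0^1\Psi(t)\,dt$, a short calculation shows the left-hand side of the 1D $(\star)$ equals $\int_0^1\Psi(t)^2\,dt - \bigl(\int_0^1\Psi(t)\,dt\bigr)^2 = \tfrac{1}{2}\iint(\Psi(u)-\Psi(v))^2\,du\,dv$. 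Applying Cauchy--Schwarz $(\Psi(u)-\Psi(v))^2=\bigl(\int_v^u\tilde\phi(t)\,dt\bigr)^2\leq|u-v|\int_{\min(u,v)}^{\max(u,v)}\tilde\phi(t)^2\,dt$ and swapping the order of integration by Fubini, the 1D claim reduces to the elementary but crucial area computation $\iint_{\{\min(u,v)\leq t\leq\max(u,v)\}}|u-v|\,du\,dv=t(1-t)$, which produces exactly the sharp constant $\tfrac{1}{2}$.

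To lift $(\star)$ to arbitrary $|u|$, I would use operator-theoretic tensorization. The one-dimensional inequality reads $T\preceq\tfrac{1}{2}M_w$ on $L^2([0,1])$, where $T$ denotes the integral operator with kernel $\min(u,v)-uv$ and $M_w$ denotes multiplication by $w(u)=u(1-u)$; the general fact that $A\preceq B$ and $C\succeq 0$ jointly imply $A\otimes C\preceq B\otimes C$ (via the factorization $(B-A)\otimes C=[(B-A)^{1/2}\otimes C^{1/2}][(B-A)^{1/2}\otimes C^{1/2}]^\T$) then yields $T^{\otimes|u|}\preceq\tfrac{1}{2^{|u|}}M_w^{\otimes|u|}$ by an immediate induction on $|u|$, which is exactly $(\star)$. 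Re-integrating over $\bo{x}_{\sim u}$ and letting $\bo{y}$ vary completes part~(i).

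For part~(ii), a direct componentwise computation suffices. Writing $(\tefg_0)_i(\bo{x})=C_i(\bo{x}_{\bar u})\prod_{j\in u}a_{ji}(F_j(\x_j)-\tfrac{1}{2})$ with $C_i(\bo{x}_{\bar u}):=\prod_{j\in\bar u}[a_{ji}F_j(\x_j)+b_{ji}]$, each factor $a_{ji}(F_j-\tfrac{1}{2})$ has mean zero and variance $a_{ji}^2/12$, and the factors are independent across $j\in u$, so the $(i,k)$-entry of $\var(\tefg_0)$ equals $\esp[C_iC_k]\cdot\prod_{j\in u}a_{ji}a_{jk}\cdot 12^{-|u|}$. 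On the other hand $\frac{\partial^{|u|}(\tefg_0)_i}{\partial\bo{x}_u}=C_i(\bo{x}_{\bar u})\prod_{j\in u}a_{ji}\rho_j(\x_j)$; the $\rho_j^2$ factors cancel against $W$, and $\int_0^1 u(1-u)\,du=1/6$ combined with the prefactor $1/2^{|u|}$ reproduces the same matrix entry, establishing equality in~(\ref{eq:Intief}).
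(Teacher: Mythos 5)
Your argument is correct, but it reaches (\ref{eq:Intief}) by a genuinely different route than the paper. The paper's proof is a one-shot matrix symmetrization: starting from the same identity (\ref{eq:decvtief1f}), it introduces the two vector fields $\bo{g}$ and $\bo{e}$ obtained by attaching the \emph{square root} of the kernel $\prod_k\bigl[F_k(\min(\x_k,\x_k'))-F_k(\x_k)F_k(\x_k')\bigr]$ (nonnegative since $F_k(\min)=\min(F_k,F_k')\geq F_kF_k'$) to $\frac{\partial^{|u|}\tefg}{\partial \bo{\x}_u}(\bo{\x})/\rho_u(\bo{x}_u)$ and to its primed counterpart; then $\int \bo{g}\bo{g}^\T=\int\bo{e}\bo{e}^\T$ equals the right-hand side $W$ (via the same computation $\int[F_k(\min)-F_kF_k']\,d\mu_k'=\tfrac12F_k(1-F_k)$ that you use), $\int\bo{g}\bo{e}^\T=\var(\tefg)$, and $W-\var(\tefg)=\tfrac12\int(\bo{g}-\bo{e})(\bo{g}-\bo{e})^\T\,d\mu\,d\mu'\matleq$-dominates $0$, which settles every $|u|$ and every $\NN$ simultaneously with no tensorization. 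Your route — scalar reduction by quadratic forms, transport to the uniform cube, the identity expressing the left side as $\int_0^1\Psi^2-(\int_0^1\Psi)^2$ for the primitive $\Psi$, Cauchy--Schwarz plus the area computation yielding $t(1-t)$, and then operator tensorization $T^{\otimes|u|}\preceq 2^{-|u|}M_w^{\otimes|u|}$ — is heavier, and it needs two positivity facts the paper's proof gets for free in one place (positive semidefiniteness of the Brownian-bridge kernel $\min(u,v)-uv$ and of $M_w$, without which the induction step $A\otimes C\preceq B\otimes C$ does not apply). What your approach buys is transparency about the constant: the sharp $\tfrac12$ is isolated in an elementary one-dimensional computation, and the equality case in Cauchy--Schwarz (constant $\tilde\phi$, i.e.\ $\partial^{|u|}\tefg/\partial\bo{\x}_u\propto\prod_{k\in u}\rho_k$) directly exhibits the extremal function of (\ref{eq:ex1}), whereas the paper obtains part (ii) by invoking assumption (A7) and Corollary \ref{coro:equa}(iii) rather than by your direct moment computation (which also checks out: both sides reduce to $\esp[C_iC_k]\prod_{j\in u}a_{ji}a_{jk}/12^{|u|}$). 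Two small points to tighten: the $\rho_k$ factors do not literally cancel under $u_k=F_k(\x_k)$ — they must be absorbed into the definition of $\tilde\phi$ — and the Cauchy--Schwarz step implicitly assumes local square-integrability of $\tilde\phi$ near the endpoints, which is harmless when the right-hand side of (\ref{eq:Intief}) is finite (and the inequality is vacuous otherwise).
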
    
\begin{proof}        
See Appendix  \ref{app:ineq}.                          
\end{proof}   
 
Theorem \ref{theo:ineq} establishes a new and optimal integral inequality for a specific class of functions, and this inequality is going to be used in  multivariate sensitivity analysis (see Section \ref{sec:msa}). However, it is a bit far from the Poincar\'e-type  inequalities, which involve mainly the variance of a function and the integral of the square of its gradient. In this sense, Corollary \ref{coro:wpt} provides an optimal weighted Poincar\'e's  inequality and the Poincar\'e  inequality for any smooth function $\M : \R \to \R^\NN$ and for a class of Borel probability measures.          
\begin{corollary} \label{coro:wpt}              
Let $\M : \R \to \R^\NN$ denote a function, $\bo{X}$ be $d$ input factors, $F_j$ (resp. $\rho_j$) be the CDF (resp. the PDF) of $X_j$ with $j\in \{1,\, \ldots,\, d\}$, and $\bo{x}$ be a sample value of $\bo{X}$. If $\M$ satisfies assumptions (A1)-(A5) then \\ 
       
$\quad$ (i) the optimal weighted-Poincar\'e inequality  is given by
\begin{equation}     \label{eq:wpm1}   
\var\left(f\right) \matleq \frac{1}{2}\int_{\Omega}  
\nabla \M \left(\x\right) \nabla \M^\T \left(\x\right)
 \frac{ F(\x) \left(1- F(\x)\right)}{\left[\rho(\x)\right]^2}
 \,  d\mu(\x) \, ,         
\end{equation}
where $\Sigma_1 \matleq \Sigma_2$ if $\Sigma_2 -\Sigma_1$ is a positive semi-definite matrix.\\
     
$\quad$ (ii) A new Poincar\'e inequality on the real line is given by
\begin{equation}     \label{eq:pm1}  
\var\left(f\right)  \matleq   \min \left(\frac{1}{2}
\sup_{\x \in \Omega} \frac{ F(\x) \left(1- F(\x)\right)}{\left[\rho(\x)\right]^2}, \, 
4 \left[\sup_{\x \in \Omega} \frac{ F(\x) \left(1- F(\x)\right)}{\rho(\x)}\right]^2
\right)
\int_{\Omega}  
\nabla \M \left(\x\right) \nabla \M^\T \left(\x\right)
 \,  d\mu(\x) \, .     
\end{equation} 
\end{corollary}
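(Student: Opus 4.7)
The plan is to read off (i) from the $d=1$, $|u|=1$ specialization of Theorem~\ref{theo:ineq}: set $\tefg:=\M-\M_\emptyset$; by Remark~\ref{rem:var} this function lies in $\mathcal{T}^{\{1\},1}$ (with $\boldsymbol{\alpha}=\mathbf{0}$) and shares its derivative with $\M$, so assumption (A6) holds automatically and $\var(\tefg)=\var(\M)$. Feeding this into the inequality~(\ref{eq:Intief}) with $|u|=1$ produces exactly~(\ref{eq:wpm1}), the factor $1/2$ and the Loewner order being inherited directly from Theorem~\ref{theo:ineq}.

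For (ii), the first term in the minimum is an immediate consequence of (i): the pointwise bound $F(\x)(1-F(\x))/\rho(\x)^2\leq\sup_{\x\in\Omega}F(\x)(1-F(\x))/\rho(\x)^2$ lets me pull the constant out of the matrix integral while preserving~$\matleq$. The second term $4[\sup F(1-F)/\rho]^2$ needs a genuinely different argument. I would first reduce to the scalar case through the Loewner order, using $\bo{a}^\T\var(\M)\bo{a}=\var(\bo{a}^\T\M)=:\var(g)$ for every $\bo{a}\in\R^\NN$, start from the integral equality~(\ref{eq:seqvar}), and dominate the kernel by the covariance bound $F(\min(\x,\x'))-F(\x)F(\x')\leq\sqrt{F(\x)(1-F(\x))}\,\sqrt{F(\x')(1-F(\x'))}$, which follows from reading the left-hand side as $\cov(\indic_{X\leq\x},\indic_{X\leq\x'})$. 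The variables then separate and give $\var(g)\leq\bigl(\int_{\Omega}|g'(\x)|\sqrt{F(\x)(1-F(\x))}\,d\x\bigr)^{2}$.

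The expected obstacle is converting this last expression into $4[\sup F(1-F)/\rho]^2\int(g')^2\,d\mu$. A direct Cauchy--Schwarz against the density introduces $\int F(1-F)/\rho\,d\x$, which diverges for thin-tailed measures such as the standard normal while the target bound is finite there, so a more delicate estimate is needed. The route I would take is a Hardy--Muckenhoupt argument for the one-dimensional Poincar\'e constant: center $g$ at the median $m$ of $\mu$, apply the one-sided Hardy inequality on each half-line, and bound the two Muckenhoupt constants $\sup_{\x>m}(1-F(\x))\int_m^\x dt/\rho(t)$ and its mirror by $[\sup F(1-F)/\rho]^2$, by writing $1/\rho=[F(1-F)/\rho]\cdot 1/[F(1-F)]$ and using $F\geq 1/2$ on $(m,\infty)$ (resp.\ $1-F\geq 1/2$ on $(-\infty,m)$) to control the remaining factor. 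The universal factor $4$ is then the standard Muckenhoupt--Hardy constant, and taking the minimum with the bound from~(i) delivers~(\ref{eq:pm1}).
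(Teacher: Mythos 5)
Your handling of Point (i) and of the first term in the minimum of Point (ii) coincides with the paper's own proof: the paper disposes of (i) by noting that Remark \ref{rem:var} puts $\M-\M_\emptyset$ in the situation of assumption (A6) so that Theorem \ref{theo:ineq} applies with $d=1$, $u=\{1\}$, and it obtains the first bound in (\ref{eq:pm1}) by pulling $\sup_{\x}F(\x)(1-F(\x))/\rho(\x)^2$ out of the integral in (\ref{eq:wpm1}). Where you diverge is the second term $4\left[\sup F(1-F)/\rho\right]^2$: the paper does not prove it at all but imports it wholesale as ``Corollary 1 from \cite{roustant17}'' (a Muckenhoupt-type bound on the one-dimensional Poincar\'e constant), whereas you attempt to rederive it. Rederiving a cited result is legitimate, but your sketch does not close as written.

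Two concrete problems. First, the substitution $1/\rho=[F(1-F)/\rho]\cdot 1/[F(1-F)]$ combined with $F\geq 1/2$ on $(m,\infty)$ leaves you needing to control $(1-F(\x))\int_m^{\x} dt/(1-F(t))$: this is a Lebesgue integral of $1/(1-F)$, it does not telescope, and it cannot be bounded by $\sup F(1-F)/\rho$ without reinserting a factor of $\rho$ --- i.e., you are back where you started. The manipulation that actually works (and underlies the cited corollary) inserts the \emph{square}: writing $1/\rho(t)=\left[(1-F(t))/\rho(t)\right]^2\cdot\rho(t)/(1-F(t))^2$ makes the remaining integral telescope, $\int_m^{\x}\rho(t)(1-F(t))^{-2}dt=1/(1-F(\x))-2$, and yields $B_+\leq\left[\sup_{t>m}\min(F(t),1-F(t))/\rho(t)\right]^2$. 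Second, even after this repair the Muckenhoupt route naturally produces $\min(F,1-F)$ rather than $F(1-F)$; since $F(1-F)\leq\min(F,1-F)\leq 2F(1-F)$, converting one into the other costs up to a factor $4$, so the ``standard Muckenhoupt--Hardy constant $4$'' does not by itself land on $4\left[\sup F(1-F)/\rho\right]^2$ --- you would need the sharper statement actually established in \cite{roustant17}, not a generic Hardy argument. (Your preliminary detour through (\ref{eq:seqvar}) and the bound $F(\min(\x,\x'))-F(\x)F(\x')\leq\sqrt{F(\x)(1-F(\x))}\sqrt{F(\x')(1-F(\x'))}$ is correct as far as it goes, and the reduction of the Loewner order to scalars via $\bo{a}^\T\var(\M)\bo{a}=\var(\bo{a}^\T\M)$ is fine, but as you yourself note that branch contributes nothing to the final estimate.)
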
             
\begin{proof}       
Point (i) is obvious bearing in mind Theorem \ref{theo:ineq} and Remark \ref{rem:var}. Indeed, Remark \ref{rem:var} shows that $\M$ satisfies assumption (A6). \\
Bearing in mind the supremum properties, Point (ii) becomes obvious  using Point (i) and Corollary 1 from \cite{roustant17}. 
\end{proof}               
         
\begin{rem}             
Corollary \ref{coro:wpt} gives an interesting inequality that generalizes the  inequality obtained in \cite{hardy73} and used in \cite{kucherenko16}. 
Indeed, for a function $f : \R \to \R$ ($\NN=1$), our inequality (this paper) is reduced to             
\begin{equation}     \label{eq:wp1}  
\var\left(\M\right) \leq \frac{1}{2}\int_{\Omega}  
\left[\nabla \M \left(\x\right)\right]^2  
 \frac{ F(\x) \left(1- F(\x)\right)}{\rho(\x)^2}
 \,  d\mu(\x) \, .  
\end{equation}
In the  case of the standard uniform distribution ($F(x)=x$ and $\rho(\x)=1$), Equation (\ref{eq:wp1}) was already established in \cite{hardy73}, and it has been shown in  \cite{kucherenko16} that the new integral inequality allows for improving the optimal Poincar\'e inequality associated with the standard uniform distribution.
\end{rem}     
   
Based on Corollary \ref{coro:wpt}, we are going to give a general result about integral inequalities for any $d$-dimension function $\M : \R^d \to \R^\NN$. Of course, a general result can be inefficient for a specific function. For instance, an optimal inequality for an additive function (i.e., $\M(\bo{x}) =\sum_{j=1}^d \M_j(x_j)$) can be inefficient for a function of the form $\M(\bo{x}) =\prod_{j=1}^d \M_j(x_j)$ (multiplicative function). Thus, including some information about the form of a function can help improving some integral inequalities.  

            
\begin{defi}  
Consider a function $\M : \R^d \to \R^\NN$ that includes $d$ variables and $\frac{\partial^|u| \M}{\partial \bo{\x}_u}$ its cross-partial derivative w.r.t. $\bo{x}_u$ with $u\subseteq \{1,\, \ldots,\, d\}$.  For all $j \in \{1,\, \ldots,\, d\}$, the theoretical interaction super-set of $\X_j$ is the set $\mathcal{A}_j$ given by
\begin{equation}    
\mathcal{A}_j := \left\{\{j,\, u\} :  \text{such that}\, \frac{\partial^{|u|+1} \M}{\partial \bo{\x}_{\{j,\, u\}}} \neq \bo{0}, \,  u \subseteq \{1,\, \ldots,\,  d\}\backslash \{j\}  \right\} \, .    
\end{equation} 
\end{defi} 
         
The interaction set $\mathcal{A}_j$ gives the information about the variables that  interact with $X_j$ and the order of interactions regardless the importance of these interactions in practice. The cardinal of $\mathcal{A}_j$ (i.e., $|\mathcal{A}_j|$) can be seen as the number of the active components in ANOVA decomposition in a direction driven by $\X_j$. Of course, we have $1\leq |\mathcal{A}_j| \leq 2^{d-1}$, $|\mathcal{A}_j| =1$ 
for an additive function  and $|\mathcal{A}_j| =2^{d-1}$ for a multiplicative function. Using the definition of the set $\mathcal{A}_j$, Theorem \ref{theo:pweqineq} generalizes results from Corollary \ref{coro:wpt} to cope with  a $d$-dimension function $\M : \R^d \to \R^\NN$.                    
            
\begin{theorem} \label{theo:pweqineq}  
Let $\M : \R^d \to \R^\NN$ denote a function, $\bo{X}$ be $d$ input factors, $F_j$ (resp. $\rho_j$) be the CDF (resp. the PDF) of $X_j$ with $j\in \{1,\, \ldots,\, d\}$,  $\bo{x}$ be a sample value of $\bo{X}$ and assume that assumptions (A1)-(A5) hold. \\ 

$\quad$ (i) A general integral inequality is given by 
\begin{equation}   
 \var(\M)   \matleq \sum_{\substack{u, \, u \subseteq \{1,\, \ldots,\, d\}\\ |u|>0}} 
		\frac{1}{2^{|u|}} \int_{\Omega} 
\frac{\partial \M}{\partial \bo{\x}_u}\left(\bo{\x}\right) 
\frac{\partial \M^\T}{\partial \bo{\x}_u}\left(\bo{\x}\right) 
 \prod_{k \in u} \frac{F_{k}(\x_{k})\left(1 -F_{k}(\x_{k})\right)}{\left[\rho_{k} (\x_{k})\right]^2}  \, d\mu(\bo{\x})  \,  ,      
\end{equation} 
where $\Sigma_1 \matleq \Sigma_2$ if $\Sigma_2 -\Sigma_1$ is a positive semi-definite matrix.\\
   
$\quad$ (ii) If there exists $J_0 \subseteq \{1,\, \ldots,\, d\}$ such that $\int_{\Omega_j} \M \, d\mu_j =\bo{0},\, \forall\, j\in J_0$ then we have     
\begin{equation}
 \var(\M)   \matleq \frac{1}{2}                          
\int_{\Omega} 
\frac{\partial \M}{\partial \x_{(1)}}\left(\bo{\x}\right) 
\frac{\partial \M^\T}{\partial \x_{(1)}}\left(\bo{\x}\right) 
 \frac{F_{(1)}(\x_{(1)})\left(1 -F_{(1)}(\x_{(1)})\right)}{\left[\rho_{(1)} (\x_{(1)})\right]^2}  \, d\mu(\bo{\x}) \,  ,         
\end{equation}       
with the integer  $(1)$ given by 
$$(1) = \argmin{j \in J_0} \int_{\Omega}             
\frac{\partial \M}{\partial \x_{j}}\left(\bo{\x}\right) 
\frac{\partial \M^\T}{\partial \x_{j}}\left(\bo{\x}\right) 
 \frac{F_{j}(\x_{j})\left(1 -F_{j}(\x_{j})\right)}{\left[\rho_{j} (\x_{j})\right]^2} \, d\mu(\bo{\x}) \, .$$        
      
$\quad$ (iii) If the set $\mathcal{A}_j$ is known for all $j \in \{1,\,\ldots,\,d\}$ then
\begin{equation}          
 \var(\M)   \matleq \sum_{k=1}^d \frac{|\mathcal{A}_{(j)}\backslash \cup_{i \in \{(1),\, \ldots,\, (j-1)\} } \mathcal{A}_{(i)} |}{2}                        
\int_{\Omega}      
\frac{\partial \M}{\partial \x_{(j)}}\left(\bo{\x}\right) 
\frac{\partial \M^\T}{\partial \x_{(j)}}\left(\bo{\x}\right) 
 \frac{F_{(j)}(\x_{(j)})\left(1 -F_{(j)}(\x_{(j)})\right)}{\left[\rho_{(j)} (\x_{(j)})\right]^2}  \, d\mu(\bo{\x}) \,  ,       
\end{equation}                   
where the integer $(j)$ is given by   
$$(j) = \argmin{j \in \{1,\,\ldots, \, d\}\backslash \{(1),\, \ldots,\, (j-1)\}} \int_{\Omega}      
\frac{\partial \M}{\partial \x_{j}}\left(\bo{\x}\right) 
\frac{\partial \M^\T}{\partial \x_{j}}\left(\bo{\x}\right) 
 \frac{F_{j}(\x_{j})\left(1 -F_{j}(\x_{j})\right)}{\left[\rho_{j} (\x_{j})\right]^2}  \, d\mu(\bo{\x}) \, ,$$     
and $|\mathcal{A}_{(j)} |$ is the cardinal of the set $\mathcal{A}_{(j)}$.  
\end{theorem}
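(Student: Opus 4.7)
The plan is to combine the ANOVA-type variance decomposition $\var(\M)=\sum_{u\neq\emptyset}\var(\M_u)$ from Theorem \ref{theo:deriaov1}(iii) with the sharp integral inequality from Theorem \ref{theo:ineq}, using the total-interaction effect function $\M_u^{sup}$ of Lemma \ref{lem:ex} as the bridge between individual ANOVA components and cross-partial derivatives of $\M$.

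For part (i), I would first record that by Lemma \ref{lem:ex}, $\M_u^{sup}\in\mathcal{T}^{u,d}$ and satisfies assumption (A6) with $\partial^{|u|}\M_u^{sup}/\partial\bo{x}_u=\partial^{|u|}\M/\partial\bo{x}_u$. Next, starting from the defining formula (\ref{eq:ex2}) and substituting $\tef_{u\setminus v}=\sum_{s:s\cap(u\setminus v)\neq\emptyset}\M_s$ from the ANOVA decomposition of Theorem \ref{theo:deriaov1}, an elementary inclusion-exclusion over $v\subseteq u$ collapses to $\M_u^{sup}=\sum_{w\supseteq u}\M_w+\boldsymbol{\alpha}$. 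Mutual orthogonality of the $\M_w$'s then yields $\var(\M_u^{sup})=\sum_{w\supseteq u}\var(\M_w)\succeq\var(\M_u)$ in the Loewner order. Applying Theorem \ref{theo:ineq} to $\M_u^{sup}$ gives $\var(\M_u)\matleq\var(\M_u^{sup})\matleq\frac{1}{2^{|u|}}\int(\partial^{|u|}\M/\partial\bo{x}_u)(\partial^{|u|}\M/\partial\bo{x}_u)^\T\prod_{k\in u}\frac{F_k(1-F_k)}{\rho_k^2}d\mu$, and summing over $u$ produces (i).

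For part (ii), the orthogonality of the functional ANOVA forces $\int_{\Omega_j}\M\,d\mu_j=\M_\emptyset+\sum_{u\not\ni j,u\neq\emptyset}\M_u(\bo{\x}_u)$, so the hypothesis $\int_{\Omega_j}\M\,d\mu_j=\bo{0}$ for $j\in J_0$ forces $\M_\emptyset=\bo{0}$ and $\M_u=\bo{0}$ for every $u\not\ni j$. Hence every nonzero ANOVA term contains every element of $J_0$, which combined with the identity from part (i) gives $\M=\M_{\{j\}}^{sup}$ up to a constant for any $j\in J_0$. Applying Theorem \ref{theo:ineq} with $u=\{j\}$ yields the single-variable bound, and optimizing over $j\in J_0$ gives (ii) with the stated argmin prescription.

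For part (iii), the key observation is that for any $j\in u$, applying Theorem \ref{lem:deriaov} equation (\ref{eq:deritef}) to $\M_u$ with the reduced set $\{j\}$ gives $\M_u=\gtef_{\{j\}}^{\M_u}\in\mathcal{T}^{\{j\},d}$ (the ``average'' term vanishes because $\int_{\Omega_j}\M_u\,d\mu_j=\bo{0}$), so Theorem \ref{theo:ineq} provides $\var(\M_u)\matleq\frac{1}{2}\int(\partial\M_u/\partial x_j)(\partial\M_u/\partial x_j)^\T\frac{F_j(1-F_j)}{\rho_j^2}d\mu$. The weighted orthogonality $\int(\partial\M_{w_1}/\partial x_j)(\partial\M_{w_2}/\partial x_j)^\T\frac{F_j(1-F_j)}{\rho_j^2}d\mu=\bo{0}$ for distinct $w_1,w_2\ni j$ (pick any $i\in w_1\triangle w_2\setminus\{j\}$ and integrate $x_i$ out first, noting that the weight is independent of $x_i$ and that $\int_{\Omega_i}\M_{w_k}d\mu_i=\bo{0}$) then upgrades this to $\var(\M_u)\matleq\frac{1}{2}\int(\partial\M/\partial x_j)(\partial\M/\partial x_j)^\T\frac{F_j(1-F_j)}{\rho_j^2}d\mu$ for any $j\in u$. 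Each active $u$ (i.e., $u\in\mathcal{A}_{j_0}$ for some $j_0\in u$) is assigned to the first ordered index $(j)$ it contains; this partitions the active $u$'s, the cell associated with $(j)$ has cardinality $|\mathcal{A}_{(j)}\setminus\bigcup_{i<j}\mathcal{A}_{(i)}|$, and summing produces the announced bound. The greedy argmin ordering is precisely the one that minimizes the resulting right-hand side term by term.

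The main technical obstacle will be the weighted orthogonality argument for partial derivatives of distinct ANOVA components, since the weight $F_j(1-F_j)/\rho_j^2$ involves only one coordinate and one must verify carefully that integrating out a missing coordinate kills the relevant cross term. The other delicate point is the collapse of the inclusion-exclusion in part (i) identifying $\M_u^{sup}$ with $\sum_{w\supseteq u}\M_w+\boldsymbol{\alpha}$, which underlies the Loewner-order comparison $\var(\M_u)\matleq\var(\M_u^{sup})$.
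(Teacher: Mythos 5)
Your proposal is correct and, for parts (i) and (iii), follows the same skeleton as the paper: the ANOVA identity $\var(\M)=\sum_{u\neq\emptyset}\var(\M_u)$ from Theorem \ref{theo:deriaov1}, the domination $\var(\M_u)\matleq\var(\M_u^{sup})$, Theorem \ref{theo:ineq} applied via Lemma \ref{lem:ex}, and in (iii) the same greedy assignment of each active subset to its first ordered index with the cardinality count $|\mathcal{A}_{(j)}\backslash \cup_{i<j}\mathcal{A}_{(i)}|$. The differences are worth noting. First, you actually prove the two comparisons the paper leaves unproved or outsourced: the identity $\M_u^{sup}=\sum_{w\supseteq u}\M_w+\boldsymbol{\alpha}$ (whence $\var(\M_u)\matleq\var(\M_u^{sup})$ by orthogonality), which the paper simply asserts, and the single-index domination in (iii), for which the paper cites \cite{roustant14} ($\var(\tefg_u)\matleq\var(\tefg_j)$ for $j\in u$) whereas you derive it from a weighted orthogonality of the $\partial\M_w/\partial x_j$ under the weight $F_j(1-F_j)/\rho_j^2$; your argument for that orthogonality (integrate out a coordinate $i\in w_1\triangle w_2\setminus\{j\}$, on which the weight does not depend, and use $\int_{\Omega_i}\partial\M_{w_1}/\partial x_j\,d\mu_i=\bo{0}$ by differentiating the centering condition under the integral) is sound at the paper's level of rigor. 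Second, your part (ii) takes a genuinely different route: you show the hypothesis kills every ANOVA term not containing $j\in J_0$, so $\M=\M_{\{j\}}^{sup}$ and Theorem \ref{theo:ineq} applies directly; the paper instead conditions on $\bo{X}_{\sim j}$, writes $\var(\M)=\int_{\Omega_{\sim j}}\var(g_j)\,d\mu_{\sim j}$ for $g_j(x_j)=\M(x_j,\bo{x}_{\sim j})$, and applies the one-dimensional weighted Poincar\'e inequality (\ref{eq:wpm1}) fiber by fiber. Both reach the same bound; the paper's conditioning argument avoids the full ANOVA machinery, while yours integrates (ii) more uniformly into the framework of parts (i) and (iii). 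No gaps.
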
 
\begin{proof} 
See Appendix \ref{app:pweqineq}. 
\end{proof}

\section{Application to multivariate sensitivity analysis} \label{sec:msa}
Multivariate sensitivity analysis, including variance-based sensitivity, allows for identifying the most important input factors of a function or model $\M : \R^d \to \R^\NN$. It is based on what we call sensitivity functionals, which aim to measure the single and overall contributions of input factors over the whole model outputs. While, the first-order functional, that is, 
$$ 
\M^{\text{fo}}_u :=    
\esp \left[\M(\bo{\x})| \bo{\X}_{u}\right] -\esp \left[\M(\bo{\X})\right] \, , 
$$ 
with $\esp \left[\M(\bo{\X})\right]:=\int_\Omega \M \, d\mu$,  
is used to assess the single contribution of  $\bo{\X}_u$, we use the total-effect functional (TEF), given by (\cite{lamboni18,lamboni18a}) 
 $$ 
\tef_u(\bo{\X}) := \M(\bo{\X}) - \esp \left[\M(\bo{\X})| \bo{\X}_{\sim u}\right] \, , 
$$     
 to measure the overall contribution of $\bo{\X}_u$, including interactions. The functional $\tef_u(\bo{\X})$ captures all the single contribution of input in $\bo{\X}_u$ and all the interactions between each input factor in $\bo{\X}_u$ and the other inputs.      
Likewise, the total-interaction effect functional (TIEF) allows for assessing the single contribution of $\bo{\X}_u$ and the interaction between the latter and the other inputs ($\bo{\X}_{\sim u}$). It is given as follows (\cite{lamboni19}):  
\begin{eqnarray}  \label{eq:tieftef0} 
 \tief_{u}(\bo{\X}) &:=& \sum_{\substack{v,\, v \subset u}}  
 (-1)^{|u|-|v| +1} \tef_{u \backslash v}(\bo{\X})   \, .         
\end{eqnarray}             
  
Such sensitivity functionals (SFs) are random vectors, and their components may be correlated. Proposition \ref{prop:profun} gives their properties.       
     
\begin{prop} \label{prop:profun} If assumption (A1) holds, we have \\

$\quad$ (i)  $\M(\bo{\X})$ does not depend on $\X_j$ if and only if  the first-order and total SFs are null, that is,        
\begin{equation}
\M^{\text{fo}}_j(\X_j) = \M^{tot}_j(\bo{\X}) = \bo{0} \, . 
\end{equation}    
                             
$\quad$ (ii)  $\M(\bo{\X})$ depends only on $\X_j$ if and only if 
\begin{equation}
 \M^{\text{fo}}_j(\X_j) =\M^{tot}_j(\bo{\X}) =\M(\X_j) - \esp\left[\M(\X_j) \right] \, .      
\end{equation}            
\end{prop}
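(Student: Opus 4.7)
The plan is to reduce both assertions to elementary properties of conditional expectation under the independence assumption (A1). The two workhorse facts I will use are: (a) if $\M(\bo{\X})$ does not involve a coordinate $\X_j$, then by independence $\esp[\M(\bo{\X})\mid \X_j]=\esp[\M(\bo{\X})]$ and $\esp[\M(\bo{\X})\mid \bo{\X}_{\sim j}]=\M(\bo{\X})$; and (b) an identity of the form $Y=\esp[Y\mid \mathcal{G}]$ is equivalent to $Y$ being $\mathcal{G}$-measurable (in the $L^2$ sense ensured by (A2)). Both points (i) and (ii) are then matters of substituting into the definitions
$$
\M^{\text{fo}}_j(\X_j)=\esp[\M(\bo{\X})\mid \X_j]-\esp[\M(\bo{\X})],\qquad \tef_j(\bo{\X})=\M(\bo{\X})-\esp[\M(\bo{\X})\mid \bo{\X}_{\sim j}].
$$

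For (i), the forward direction is a direct computation: if $\M$ does not depend on $\X_j$, then $\M$ is $\sigma(\bo{\X}_{\sim j})$-measurable, so $\esp[\M\mid \bo{\X}_{\sim j}]=\M$ and $\tef_j=\bo{0}$; independence of $\X_j$ and $\bo{\X}_{\sim j}$ then yields $\esp[\M\mid \X_j]=\esp[\M]$ and hence $\M^{\text{fo}}_j=\bo{0}$. For the converse, I would observe that $\tef_j=\bo{0}$ alone is enough: it reads $\M(\bo{\X})=\esp[\M\mid \bo{\X}_{\sim j}]$, so $\M$ is $\sigma(\bo{\X}_{\sim j})$-measurable and therefore a function of $\bo{\X}_{\sim j}$ only. (One could alternatively argue from $\M^{\text{fo}}_j=\bo{0}$ alone via the variance decomposition in Theorem~\ref{theo:deriaov1}.)

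For (ii), the forward direction amounts to writing $\M(\bo{\X})=g(\X_j)$; independence yields $\esp[g(\X_j)\mid \bo{\X}_{\sim j}]=\esp[g(\X_j)]$, so $\tef_j(\bo{\X})=g(\X_j)-\esp[g(\X_j)]$, while trivially $\esp[g(\X_j)\mid \X_j]=g(\X_j)$ gives the same expression for $\M^{\text{fo}}_j(\X_j)$, matching $\M(\X_j)-\esp[\M(\X_j)]$. For the converse, the hypothesis $\M^{\text{fo}}_j(\X_j)=\M(\bo{\X})-\esp[\M(\bo{\X})]$ rearranges to $\esp[\M\mid \X_j]=\M(\bo{\X})$, so $\M$ is $\sigma(\X_j)$-measurable and hence depends only on $\X_j$; the condition on $\tef_j$ is then automatically consistent.

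The only real subtlety — not a mathematical obstacle but an interpretive one — is the notation $\M(\X_j)$ in (ii), which only makes literal sense once $\M$ has been shown to depend on $\X_j$ alone. I would therefore state (ii) up front as the equivalence "$\M^{\text{fo}}_j(\X_j)=\tef_j(\bo{\X})=\M(\bo{\X})-\esp[\M(\bo{\X})]$ if and only if $\M$ depends only on $\X_j$", and only at the end note that under this equivalence, $\M(\bo{\X})$ and $\M(\X_j)$ are the same random variable, which legitimizes the formula as written.
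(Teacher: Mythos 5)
Your proposal is correct and follows exactly the route the paper intends: the paper's own proof is the one-line remark that the result is ``obvious bearing in mind the properties of the conditional expectation,'' and your argument simply writes out those conditional-expectation and measurability steps (together with the independence assumption (A1)) in full. Your closing remark about the notation $\M(\X_j)$ in point (ii) is a reasonable clarification but does not change the substance.
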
       
\begin{proof}  
The proof is obvious bearing in mind the properties of the conditional expectation.    
\end{proof} 
  
Using the SFs, we can define many types of generalized sensitivity indices by properly choosing the importance measure. One may use the moment dependent measure such as the variance or the moment independent measure such as the probability density, distribution or a Hilbert-Schmidt information criterion. In what follows, we use the variance as a measure of importance. When we use the variance as a measure of the variability of the model outputs, a definition of the sensitivity indices for the multivariate response models should be based on the variances of the SFs. \\          
 
The first-order covariance matrix  of $\bo{\X}_u$ is $\D_u$ given by         
\begin{equation}    \label{eq:siguf}  
\D_u := \var\left[\esp\left(\esp\M(\bo{\X})\, |\, \bo{\X}_u \right)\right] \, .        
\end{equation}                                          
  
Further, the total-effect covariance matrix of $\bo{\X}_u$ is $\D_u^{tot}$ given by     
\begin{equation}    \label{eq:sigut} 
\D_u^{tot} := \var\left[\M(\bo{\X}) - \esp \left[\M(\bo{\X})| \bo{\X}_{\sim u}\right]\right] \, .                 
\end{equation}                 
      
Likewise, the total-interaction covariance matrix of $\bo{\X}_u$ is $\D_u^{sup}$ given by
\begin{equation}  \label{eq:siguti}
\D_u^{sup} := \var\left[\tief_{u}(\bo{\X})\right] \, .   
\end{equation}    
                  
For the single-response models ($\NN=1$), the prioritization of input factors based on the covariance matrices is straightforward, as the covariance matrices are scalars. In the case of the multivariate-response models ($\NN >1$), Lamboni \cite{lamboni18a} proposed to apply matrix norms on the covariance matrices such as $\D_u, \, \D_u^{sup}, \, \D_u^{tot}$ in order to  prioritize input factors. In this paper, we consider two types of generalized sensitivity indices (GSIs) from \cite{lamboni18}. The first-type GSIs are obtained by applying the trace on the covariance matrices. This first-type GSIs are well-suited for non-correlated components of the SFs. When the components of the SFs are correlated, the second-type GSIs allow for capturing such correlations. Both types of GSIs are given in Definition \ref{def:gsi}. 
          
\begin{defi}               \label{def:gsi} 
Consider $\Sigma$, $\D_u$, $\D_u^{sup}$, and $\D_u^{tot}$ the covariance matrices of the model outputs, the first-order, the total-interaction effect and the total-effect functionals,   respectively. \\            

The classical GSIs (\cite{lamboni09,lamboni11,gamboa14,lamboni18a}) are defined below. \\          
The first-order GSI of $\bo{X}_u$ is defined as follows:  
\begin{equation}        \label{eq:gsif}   
GSI_u^F := \frac{\normf{\D_u^{1/2}}^2}{\normf{\Sigma^{1/2}}^2} \, .
\end{equation}     
Further, the total GSI of $\bo{X}_u$ is given by   
\begin{equation}  \label{eq:gsitf} 
GSI_{T_u}^F := \frac{\normf{\left(\D_u^{tot}\right)^{1/2}}^2}{\normf{\Sigma^{1/2}}^2} \, , 
\end{equation}    
and the total-interaction GSI of $\bo{X}_u$ is given by    
\begin{equation}  \label{eq:gsitif} 
GSI_{sup, u}^F := \frac{\normf{\left(\D_u^{sup}\right)^{1/2}}^2}{\normf{\Sigma^{1/2}}^2} \, . 
\end{equation}           
         
Likewise, the second-type GSIs (\cite{lamboni18a}) are defined as follows:
\begin{equation}        \label{eq:gsil2}      
GSI_u^{l_2} := \frac{\normf{\D_u}}{\NN\normf{\Sigma^{1/2}}^2} \, ; 
\end{equation}     
\begin{equation}  \label{eq:gsitl2} 
GSI_{T_u}^{l_2} := \frac{\normf{\D_u^{tot}}}{\NN \normf{\Sigma^{1/2}}^2} \, ; 
\end{equation}   
and    
\begin{equation}  \label{eq:gsitil2} 
GSI_{sup, u}^{l_2} := \frac{\normf{\D_u^{sup}}}{\NN \normf{\Sigma^{1/2}}^2} \, . 
\end{equation}    
\end{defi}                                   
    
\begin{rem}   
The first-type GSIs such as $GSI_u^F,\, GSI_{T_u}^F, \, GSI_{sup, u}^F$ are equivalent to the classical definition, that is, 
$$
GSI_u^F =\frac{\trace\left(\D_u \right)}{\trace\left(\Sigma\right)},\quad 
GSI_{T_u}^F= \frac{\trace\left(\D_u^{tot} \right)}{\trace\left(\Sigma\right)},\quad 
GSI_{sup, u}^F = \frac{\trace\left(\D_u^{sup} \right)}{\trace\left(\Sigma\right)}\, . 
$$
 
In the case of single response models ($\NN=1$), both types of GSIs come down to Sobol' indices.                     
\end{rem}     
           
\subsection{Derivative-based GSIs and proxy-measure for multivariate sensitivity analysis} \label{sec:gside}
The sample-based computations of GSIs in Definition \ref{def:gsi} can required a lot of model runs, which can become intractable in the case of complex models that require a lot of time for one model run. The upper bounds of the total and total-interaction GSIs can be used for quickly selecting the most important input factors (\cite{kucherenko09,lamboni13}). This section provides the derivative-based expressions of sensitivity functionals  and  new upper bounds of the GSIs. \\             
 
The derivative-based expressions of the TIEF in (\ref{eq:tieftef0}) is given by (see Lemma \ref{lem:ex})     
\begin{eqnarray}  \label{eq:tiefdef1} 
 \tief_{u}(\bo{\X}) &=& (-1)^{|u|+1} \int_{\Omega_u}
\frac{\partial^{|u|} \M}{\partial \bo{\x}_{u}}\left(\bo{\x}'_{u},\bo{\X}_{\sim u}\right) 
 \prod_{j \in u} \frac{F_{j}(\x_{j}') - \indic_{[\x_{j}' \geq \X_{j}]}}{\rho_{j} (\x_{j}') }  \, d\mu(\bo{\x}'_{u}) 
  \, .        
\end{eqnarray}    
For a single input $\X_j$, the TIEF comes down to the total-effect functional (TEF).  \\ 

Likewise, it comes from Theorem \ref{theo:deriaov1} that the derivative-based first-order functional is given by  
\begin{eqnarray}  \label{eq:fodef1} 
\M^{\text{fo}}_j(\x_j) &=& \int_{\Omega} \frac{\partial \M}{\partial \x_{j}}\left(\bo{\x}'\right) 
 \frac{F_{j}(\x_{j}') - \indic_{[\x_{j}' \geq \x_{j}]}}{\rho_{j} (\x_{j}') }  \, d\mu(\bo{\x}')  \, .         
\end{eqnarray}

\subsubsection{Upper bound of the first-order generalized sensitivity indices}
By Corollary \ref{coro:equa}, the first-order covariance matrix of $\X_j$ (i.e., $\Sigma_j=\var \left(\M^{\text{fo}}_j \right)$) is given by
         
\begin{eqnarray}     
\Sigma_j & = & \int_{\Omega^2} 
\frac{\partial \M}{\partial \x_{j}}\left(\bo{\x} \right) 
\frac{\partial\M^\T}{\partial \x_j}\left(\bo{\x}'\right)   
\times  \frac{F_{j}\left[\min(\x_{j},\,\x_{j}')\right] -F_{k}(\x_{j})F_{k}(\x_{j}')}{\rho_{j} (\x_{j})\rho_{j} (\x_{j}')}    
d\mu(\bo{\x}) d\mu\bo{\x}')     
  \, ,    
\end{eqnarray}
and by Theorem \ref{theo:ineq}, the upper bound of $\Sigma_j$ comes from the inequality  
     
\begin{eqnarray}     
\Sigma_j  &\matleq  & \frac{1}{2}               
\int_{\Omega} 
\frac{\partial \M}{\partial \x_j}\left(\bo{\x}\right) 
\frac{\partial \M^\T}{\partial \x_{j}}\left(\bo{\x}\right) 
\frac{F_{j}(\x_{j})\left(1 -F_{j}(\x_{j})\right)}{\left[\rho_{j} (\x_{j})\right]^2}  \, d\mu(\bo{\x}) \, . 
\end{eqnarray} 
        
The computation of the upper bound of $\Sigma_j$, that is, 
$$ 
N\!U\!B_j:=\frac{1}{2}                
\int_{\Omega} \frac{\partial \M}{\partial \x_j}\left(\bo{\x}\right) 
\frac{\partial \M^\T}{\partial \x_{j}}\left(\bo{\x}\right) 
\frac{F_{j}(\x_{j})\left(1 -F_{j}(\x_{j})\right)}{\left[\rho_{j} (\x_{j})\right]^2}  \, d\mu(\bo{\x}) \, , 
$$ 
will require the evaluation of the model derivatives for only one sample of input values.   
            
\subsubsection{Upper bound of the total and total-interaction generalized sensitivity indices}
By Corollary \ref{coro:equa}, the total-interaction covariance matrix  (i.e., $\Sigma_u^{sup}=\var\left(\tief_u\right)$) is given by       
\begin{eqnarray}   \label{eq:decvtief1}                                         
\Sigma_u^{sup}  & = &               
\int_{\Omega\times \Omega_u} 
\frac{\partial^{|u|} \M}{\partial \bo{\x}_{u}}\left(\bo{\x}\right) 
\frac{\partial^{|u|} \M^\T}{\partial \bo{\x}_{u}}\left(\bo{\x}'_{u} ,\bo{\x}_{\sim u}\right) 
\prod_{k \in u} \frac{F_{k}\left[\min(\x_{k},\,\x_{k}')\right] -F_{k}(\x_{k})F_{k}(\x_{k}')}{\rho_{k} (\x_{k})\rho_{k} (\x_{k}')}  \, \nonumber \\    
 &  &    \times        
d\mu(\bo{\x}) d\mu(\bo{\x}')          
\,  ,
\end{eqnarray}                   
			
and by Theorem \ref{theo:ineq}, the upper bound of $\Sigma_u^{sup}$ comes from the inequality
       
\begin{eqnarray}   \label{eq:ubtief}                                          
\Sigma_u^{sup}  & \matleq  & \frac{1}{2^{|u|}}             
\int_{\Omega}   
\frac{\partial^{|u|} \M}{\partial \bo{\x}_{u}}\left(\bo{\x}\right) 
\frac{\partial^{|u|} \M^\T}{\partial \bo{\x}_{u}}\left(\bo{\x}\right) 
\prod_{k \in u} \frac{F_{k}(\x_{k})\left(1 -F_{k}(\x_{k})\right)}{\left[\rho_{k} (\x_{k})\right]^2}  \, d\mu(\bo{\x}) \,  .   
\end{eqnarray}          
       
For a single input $\X_j$ ($u=\{j\}$), the TIEF functional is equal to the TEF, and we deduce the upper bound of $\Sigma_j^{tot}$ from the inequality

\begin{eqnarray}   \label{eq:ubtef}                                           
\Sigma_j^{tot}  & \matleq  & \frac{1}{2}             
\int_{\Omega} 
\frac{\partial \M}{\partial \x_{j}}\left(\bo{\x}\right) 
\frac{\partial{j} \M^\T}{\partial \x_{j}}\left(\bo{\x}\right) 
 \frac{F_{j}(\x_{j})\left(1 -F_{j}(\x_{j})\right)}{\left[\rho_{j} (\x_{j})\right]^2}  \, d\mu(\bo{\x}) \,  .  
\end{eqnarray}     
 
We can see that the upper bound of $\Sigma_j$ (i.e., $N\!U\!B_j$) is equal to the upper bound of $\Sigma_j^{tot}$. This result is not surprising because the first-order GSI is equal to the total GSI for additive models such as  $\M(X_1,X_2)=  X_1 + X_2$.                  
          
\subsection{Estimator of the proxy-measure of generalized sensitivity indices}\label{sec:proxy} 
This section deals with the estimators of the upper bounds of the GSIs, which will be used for computing the proxy-measure of the GSIs. \\
 
The upper bound of the non-normalized total-interaction GSI is  given by 
\begin{equation} \label{eq:UBdef} 
N\!U\!B_u^{sup} := \frac{1}{2^{|u|}}             
\int_{\Omega} 
\frac{\partial^{|u|} \M}{\partial \bo{\x}_{u}}\left(\bo{\x}\right) 
\frac{\partial^{|u|} \M^\T}{\partial \bo{\x}_{u}}\left(\bo{\x}\right) 
\prod_{k \in u} \frac{F_{k}(\x_{k})\left(1 -F_{k}(\x_{k})\right)}{\left[\rho_{k} (\x_{k})\right]^2}  \, d\mu(\bo{\x}) \, ,
\end{equation}
and we can compute $N\!U\!B_u^{sup}$ using the following estimator.
 
\begin{theorem}        \label{theo:est}
Let $\bo{\X}_i$, $i=1,\, 2,\, \ldots, \, m$  be a sample of size $m$ from $\bo{\X}$. If assumptions (A1)-(A5)  hold then \\  
        
 $\quad (i) $ the unbiased estimator of $N\!U\!B_u^{sup}$ is given by  
\begin{eqnarray} \label{eq:ubest}     
\widehat{N\!U\!B_u^{sup}} & := &  \frac{1}{2^{|u|} m}\sum_{i=1}^m     
\frac{\partial^{|u|} \M}{\partial \bo{\x}_{u}}\left(\bo{\X}_i\right) 
\frac{\partial^{|u|} \M^\T}{\partial \bo{\x}_{u}}\left(\bo{\X}_i\right) 
\prod_{k \in u} \frac{F_{k}(\X_{i,k})\left(1 -F_{k}(\X_{i,k})\right)}{\left[\rho_{k} (\X_{i,k})\right]^2} \, ,  
\end{eqnarray}                             
 with                
\begin{equation}   
\esp\left(\widehat{N\!U\!B_u^{sup}}\right)  = N\!U\!B_u^{sup}  \, , 
\end{equation}
and we have 
\begin{equation}   
\widehat{N\!U\!B_u^{sup}} \xrightarrow{\mathcal{P}} N\!U\!B_u^{sup} \, ,
\end{equation}         
when  $m \to +\infty$, with $\xrightarrow{P}$ the convergence in probability. \\  
     
$\quad (ii) $ The unbiased estimator of $\trace\left(N\!U\!B_u^{sup}\right)$ is given by                                                  
\begin{equation} \label{eq:ubestF}     
\widehat{\trace\left(N\!U\!B_u^{sup}\right)}  :=   \frac{1}{2^{|u|} m}\sum_{i=1}^m \trace\left[    
\frac{\partial^{|u|} \M}{\partial \bo{\x}_{u}}\left(\bo{\X}_i\right) 
\frac{\partial^{|u|} \M^\T}{\partial \bo{\x}_{u}}\left(\bo{\X}_i\right) 
\right]
\prod_{k \in u} \frac{F_{k}(\X_{i,k})\left(1 -F_{k}(\X_{i,k})\right)}{\left[\rho_{k} (\X_{i,k})\right]^2} \, ,       
\end{equation}                                 
and  we have \\                     
\begin{equation}      
\widehat{\trace\left(N\!U\!B_u^{sup}\right)} \xrightarrow{\mathcal{P}}  \trace\left(N\!U\!B_u^{sup}\right)
\quad  \text{when} \; m \to +\infty\, .                        
\end{equation}
     
$\quad (iii) $ The consistent estimator of $\normf{N\!U\!B_u^{sup}}$ is given by  
\begin{equation} \label{eq:ubestl2}     
\widehat{\normf{N\!U\!B_u^{sup}}}  :=  \frac{1}{2^{|u|} m} \normf{ 
\sum_{i=1}^m    
\frac{\partial^{|u|} \M}{\partial \bo{\x}_{u}}\left(\bo{\X}_i\right) 
\frac{\partial^{|u|} \M^\T}{\partial \bo{\x}_{u}}\left(\bo{\X}_i\right) 
\prod_{k \in u} \frac{F_{k}(\X_{i,k})\left(1 -F_{k}(\X_{i,k})\right)}{\left[\rho_{k} (\X_{i,k})\right]^2}
} \, ,       
\end{equation}                                 
and we have   
\begin{equation}    
\widehat{\normf{N\!U\!B_u^{sup}}} \xrightarrow{\mathcal{P}}  \normf{N\!U\!B_u^{sup}} \, .
\end{equation}     
when $m \to +\infty$, with $\xrightarrow{P}$ the convergence in probability. 
\end{theorem}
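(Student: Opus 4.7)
The plan is to treat the estimator as an ordinary Monte Carlo average of an i.i.d. matrix-valued sample, and then to invoke Fubini's theorem for unbiasedness and the weak law of large numbers (together with the continuous mapping theorem) for consistency.

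First I would prove (i). Since $\bo{\X}_1, \ldots, \bo{\X}_m$ are i.i.d.\ copies of $\bo{\X}\sim \mu$, each summand in \eqref{eq:ubest} is an i.i.d.\ copy of the $\NN\times \NN$ random matrix
$$
\bo{M}(\bo{\X}) := \frac{\partial^{|u|}\M}{\partial \bo{\x}_u}(\bo{\X})\,\frac{\partial^{|u|}\M^\T}{\partial \bo{\x}_u}(\bo{\X})\,\prod_{k\in u}\frac{F_k(\X_k)(1-F_k(\X_k))}{[\rho_k(\X_k)]^2}.
$$
By definition \eqref{eq:UBdef}, the matrix $2^{|u|}\,N\!U\!B_u^{sup}$ is exactly the entry-wise integral of $\bo{M}$ against $\mu$, which is finite by assumption (the integral defining $N\!U\!B_u^{sup}$ is assumed to exist; this is ensured entry-wise by (A4) together with integrability of the weight $\prod_k F_k(1-F_k)/\rho_k^2$ against the density, guaranteeing $\esp\,\|\bo{M}(\bo{\X})\|<+\infty$). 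Applying Fubini's theorem entry-by-entry gives $\esp\,\bo{M}(\bo{\X}) = 2^{|u|}\,N\!U\!B_u^{sup}$, so that
$$
\esp\,\widehat{N\!U\!B_u^{sup}} \;=\; \frac{1}{2^{|u|}m}\sum_{i=1}^m \esp\,\bo{M}(\bo{\X}_i)\;=\; N\!U\!B_u^{sup},
$$
which is the unbiasedness claim. Convergence in probability then follows from the weak law of large numbers applied to each of the $\NN^2$ scalar entries of the matrix, since each entry is an i.i.d.\ average with finite mean.

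Part (ii) I would deduce directly from (i) by linearity of the trace: since $\trace$ is a linear functional on $\R^{\NN\times\NN}$, we have
$$
\esp\,\widehat{\trace(N\!U\!B_u^{sup})} \;=\; \trace\,\esp\,\widehat{N\!U\!B_u^{sup}} \;=\; \trace(N\!U\!B_u^{sup}),
$$
and convergence in probability is inherited from that of the matrix estimator (either directly by the WLLN applied to the scalar i.i.d.\ sample $\trace[\cdot]$, or by the continuous mapping theorem, since $\trace$ is continuous on $\R^{\NN\times\NN}$).

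For part (iii) I would invoke the continuous mapping theorem: the Frobenius norm $\normf{\cdot}$ is a continuous map from $\R^{\NN\times\NN}$ to $\R_+$. Because $\widehat{N\!U\!B_u^{sup}} \xrightarrow{\mathcal{P}} N\!U\!B_u^{sup}$ entry-wise (hence in the Frobenius topology), the continuous mapping theorem yields
$$
\widehat{\normf{N\!U\!B_u^{sup}}} \;=\; \normf{\widehat{N\!U\!B_u^{sup}}} \;\xrightarrow{\mathcal{P}}\; \normf{N\!U\!B_u^{sup}},
$$
which is the stated consistency. The only delicate step, and the one I expect to be the main obstacle, is the verification that $\bo{M}(\bo{\X})$ is entry-wise $\mu$-integrable so that Fubini and the WLLN are legitimate; this requires combining (A4) with the finiteness of the weighted integral in \eqref{eq:UBdef}, and should be stated explicitly as a consequence of the standing assumptions rather than re-derived.
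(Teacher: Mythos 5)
Your proposal is correct and follows essentially the same route as the paper, which proves (i) by linearity of expectation and the law of large numbers, (ii) by linearity of the trace, and (iii) by continuity of the Frobenius norm (the paper cites Slutsky's theorem where you cite the continuous mapping theorem, which amounts to the same step here). Your write-up merely makes explicit the integrability check and the identity $\widehat{\normf{N\!U\!B_u^{sup}}}=\normf{\widehat{N\!U\!B_u^{sup}}}$ that the paper leaves implicit.
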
                                                
\begin{proof} 
Point (i) is obvious using the definition of $N\!U\!B_u^{sup}$ in (\ref{eq:UBdef}), the linearity of the expectation,  and the law of large numbers. \\
Point (ii) is a consequence of Point (i) knowing the definition of the trace ($\trace$) and using the linearity of $\trace$. \\      
Point (iii) is obtained by combining Point (i), the definition of $\normf{\cdot}$, and the Slutsky theorem.          
\end{proof}            

Now, we are going to derive the estimators of the upper bounds of both types of the normalized GSIs. Theorem \ref{theo:UBngsi} provides the proxy-measures of the total-interaction GSIs.     
      
\begin{theorem}        \label{theo:UBngsi}  
Under assumptions (A1)-(A5), we have the following proxy-measure of GSIs.\\    

 $\quad (i)$ The first-type proxy-measure of the total-interaction GSI of $\bo{\X}_u$ is given by  
\begin{equation}   \label{eq:gsiestub} 
 \widehat{U\!B_u^F} := \frac{\widehat{\trace\left(N\!U\!B_u^{sup} \right)}}{\trace\left(\widehat{\Sigma} \right)} \, ,   
\end{equation}      
where $\widehat{\Sigma}$ is the classical estimator of the variance of the model outputs, that is,  $\Sigma=\var(\M)$. \\                      
                 
If  $m \to +\infty$  we have        
\begin{equation} 
   \widehat{U\!B_u^F} \xrightarrow{\mathcal{P}} \frac{\trace\left(N\!U\!B_u^{sup}\right)}{\trace\left(\Sigma\right)}\, ,
\end{equation}      
with $\xrightarrow{P}$ the convergence in probability. \\ 
    
$\quad (ii)$ The second-type proxy-measure of the total-interaction GSI of $\bo{\X}_u$ is given by           
\begin{equation}  
 \widehat{U\!B_{u}^{l_2}} := \frac{\widehat{\normf{N\!U\!B_u^{sup}}}}{\NN \trace\left(\widehat{\Sigma} \right)} \, ,         
\end{equation}   
with  
\begin{equation}         
   \widehat{U\!B_{u}^{l_2}} \xrightarrow{\mathcal{P}} \frac{\normf{N\!U\!B_u^{sup}}}{\NN \trace\left(\Sigma\right)}\, , 
\end{equation}    
when $m \to \infty$, with $\xrightarrow{P}$ the convergence in probability.    
\end{theorem}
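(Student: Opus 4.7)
The proof is essentially a consistency argument for a ratio of estimators, so the plan is to combine the results already established in Theorem \ref{theo:est} for the numerators with a standard law of large numbers for the denominator, and then glue the two pieces together via Slutsky's theorem. The only subtlety is that the normalization $\trace(\Sigma)$ must be strictly positive for the ratio to be well-defined; this is guaranteed by the finite-second-moment assumption (A2) together with the (implicit) non-degeneracy of $\M(\bo{\X})$, since otherwise the sensitivity-analysis problem is vacuous.

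For Point (i), I would first invoke Point (ii) of Theorem \ref{theo:est} to obtain $\widehat{\trace(N\!U\!B_u^{sup})} \xrightarrow{\mathcal{P}} \trace(N\!U\!B_u^{sup})$ as $m \to +\infty$. Next, the sample covariance $\widehat{\Sigma}$ computed from $\bo{\X}_1,\ldots,\bo{\X}_m$ is the classical plug-in estimator of $\Sigma = \var(\M)$; under assumption (A2) the second moments of $\M(\bo{\X})$ are finite, so the strong law of large numbers applies componentwise and yields $\widehat{\Sigma} \xrightarrow{\mathcal{P}} \Sigma$. Since the trace is a continuous linear functional on the space of $\NN \times \NN$ matrices, the continuous mapping theorem gives $\trace(\widehat{\Sigma}) \xrightarrow{\mathcal{P}} \trace(\Sigma)$. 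Because $\trace(\Sigma) > 0$, Slutsky's theorem (or equivalently the continuous mapping theorem applied to the function $(a,b) \mapsto a/b$, continuous in a neighborhood of $(\trace(N\!U\!B_u^{sup}),\trace(\Sigma))$) yields convergence in probability of the ratio, which is precisely the claimed statement for $\widehat{U\!B_u^F}$.

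For Point (ii), the argument is identical in structure, with Point (iii) of Theorem \ref{theo:est} supplying $\widehat{\normf{N\!U\!B_u^{sup}}} \xrightarrow{\mathcal{P}} \normf{N\!U\!B_u^{sup}}$ in place of the trace convergence. Multiplying the denominator by the constant $\NN$ does not alter the argument, and one concludes by Slutsky in exactly the same way.

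The hardest step is the modest one: ensuring that the two-dimensional joint convergence $(\widehat{\trace(N\!U\!B_u^{sup})},\trace(\widehat{\Sigma})) \xrightarrow{\mathcal{P}} (\trace(N\!U\!B_u^{sup}),\trace(\Sigma))$ holds, which is needed before one can apply the continuous mapping theorem to the ratio. This is automatic here because convergence in probability for each coordinate implies joint convergence in probability to the corresponding deterministic limit, so no extra work is required. Everything else is bookkeeping: linearity of expectation, linearity of the trace, and the observation that the Frobenius norm of a sample average is a continuous function of that average.
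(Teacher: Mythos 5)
Your proposal is correct and follows essentially the same route as the paper, which simply cites Theorem \ref{theo:est} together with Slutsky's theorem; you have merely filled in the details (consistency of $\widehat{\Sigma}$ via the law of large numbers, continuity of the trace, positivity of $\trace(\Sigma)$, and joint convergence before applying the continuous mapping theorem) that the paper leaves implicit.
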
       
\begin{proof}  
Point (i) and Point (ii) are obvious bearing in mind Theorem \ref{theo:est} and the Slutsky theorem.               
\end{proof}              
                                                
\subsection{Analytical test case ( Any $d$ and $\NN=1$)} 
We consider the class of functions defined as follows:
\begin{equation} \label{eq:fcdf}   
f(\bo{\x}) = \prod_{j=1}^d g_j(x_j),\; \quad \text{with} \quad  \; g_j(x_j) = a_j F_j(x_j)+b_j\, ,                           
\end{equation}  
where $F_j$ is the CDF of $\X_j$ ($\X_j \sim F_j$), $a_j,\, b_j \in \R$ with $j=1,\,2,\, \ldots, \, d$. \\    

The function $f$ includes $d$ independent input factors following the CDF $F_j,\, j=1,\,2,\, \ldots, \, d$. Knowing that $\X_j \sim F_j   \Longleftrightarrow   F_j(\X_j) \sim \mathcal{U}[0,\, 1]$, we can write    
$$
\var\left[g_j(X_j)\right]=a_j^2\var\left[F_j(\X_j) \right]=\frac{a_j^2}{12} \, ,   
$$
and   
$$
\esp\left[F_j(\X_j)\left(1-F_j(\X_j)\right) \right] = \int_0^1 x(1-x)\, dx=\frac{1}{6}
\, .   
$$  
           
The analytical value of the non-normalized total index is given by
$$
\Sigma_j^{tot} = \esp\left[ \var\left(\M(\bo{\X}) \, |\, \X_{\sim j}  \right)\right]
= \frac{a_j^2}{12} \prod_{i=1\, i\neq j}^d \esp\left[g_i(\X_i)^2 \right]
\, ,           
$$   
and it upper bound is given by (Equation (\ref{eq:ubtef}))                              
\begin{eqnarray}
N\!U\!B_j^{tot}  &=&\frac{1}{2} \esp\left[a_j^2 F_j(\X_j)\left(1-F_j(\X_j)\right) \prod_{i=1\, i\neq j}^d g_i(\X_i)^2  \right] \nonumber \\
 & =&  \frac{a_j^2}{2} \esp\left[F_j(\X_j)\left(1-F_j(\X_j)\right) \right] \prod_{i=1\, i\neq j}^d \esp\left[g_i(\X_i)^2 \right] \nonumber \\
 &=& \frac{a_j^2}{12} \prod_{i=1\, i\neq j}^d \esp\left[g_i(\X_i)^2 \right]
\nonumber \\
 &=& \Sigma_j^{tot} \, . \nonumber  
\end{eqnarray}       
          
It comes out that the proxy-measure is exactly the total index for this class of functions. Therefore, the proxy-measure will gives the right ranking of input factors.

\subsection{Numerical test cases}\label{sec:test}
In this section, we performed numerical tests to assess the effectiveness of our proxy-measure of GSIs. To illustrate our approach, we considered two types of functions as follows: functions with a small number of inputs ($d=3$), and functions with a medium number of inputs ($d=6$, $d=10$). We computed the model derivatives using the finite difference method, and  using Sobol's sequence or Quasi-Monte Carlo (\cite{dutang13}). For the sample size $m=1000$, we replicated the process of computing the proxy-measure of GSIs $R=30$ times by changing the seed randomly when sampling input values.   \\

In this paper, the functions considered belong to either a class of functions having many important input factors (all the inputs) by interactions  or a class of functions which are continuous but  differentiable almost everywhere (see last test case). The first class of functions often requires a lot of model runs to obtain reasonable estimates of sensitivity indices.        
         
\subsubsection{Ishigami's function ($d=3$, $\NN=1$)}
The Ishigami function includes three independent input factors following a uniform distribution on $[-\pi,\, \pi]$, and it  provides one output. It is defined as follows:          
\begin{equation} \label{eq:ishi0}        
\M(\bo{\x}) = \sin(\x_1)+ 7\sin^2(\x_2) + 0.1\, \x_3^4 \sin(\x_1)              
\, .              
\end{equation}     
For this function, all the input factors are important. The true values of Sobol' indices and the estimates of the proxy-measure are  listed in Table \ref{tab:ishi}.  
     
\begin{table}[ht]   
\centering  
\begin{tabular}{rrcccccc}
  \hline 
	  \hline     
		\multicolumn{1}{r}{} & \multicolumn{3}{c}{First-type proxy-measure}	&\multicolumn{1}{r}{} &	 \multicolumn{3}{c}{Second-type proxy-measure}\\       
	\cline{2-4}  	\cline{6-8}              
    & $GSI^F_{sup,u}$  & $U_u$ & $\widehat{U\!B_u^F}$ & & $GSI^{l2}_{sup,u}$ & $U_u^{l2}$ & $\widehat{U\!B_u^{l2}}$ \\     
 \cline{2-4}  	\cline{6-8}  
  $X_1$ & 0.558 & 2.230 & 1.682 (0.03) && 0.558 & 2.230 & 1.682 (0.03)  \\ 
  $X_2$ & 0.442 & 7.079 & 5.907 (0.07) &&  0.442 & 7.079 & 5.907 (0.07) \\ 
 $X_3$  & 0.244 & 3.174 & 0.867 (0.01) && 0.244 & 3.174 & 0.867 (0.01) \\  
 \hline                 
	\multicolumn{2}{r}{} & \multicolumn{2}{c}{} 	&\multicolumn{1}{r}{} &	\multicolumn{1}{r}{}	&	 \multicolumn{2}{c}{} \\         
	  \cline{2-4}  	\cline{6-8}                
$X_1:X_2$ &  0     & 0 & 9e-21 (4e-20) &&  0     & 0 & 9e-21 (4e-20)  \\ 
$X_1:X_3$ &  0.244 & 12.698 & 2.642 (0.03) &&  0.244 & 12.698 & 2.642 (0.03) \\        
$X_2:X_3$ &  0     & 0 & 4e-22 (e-21) && 0     & 0 & 4e-22 (e-21) \\    
   \hline                     
	  \hline                                                              
\end{tabular}           
\caption{True Sobol' indices and the estimates of the proxy-measure of these indices (average over $30$ replications, followed by their standard deviations in bracket) for the Ishigami function. We also added the best (known) Poincar\'e upper bound $U_u$ (\cite{lamboni13,roustant14}).   While, the top part of this table focuses on the total indices, the bottom part deals with the total-interaction indices. For concise reporting of small values, we use e-a for $10^{-a}$.}  
\label{tab:ishi}    
\end{table}

\subsubsection{Multivariate Ishigami's function ($d=3$, $\NN=3$)}
The multivariate Ishigami function includes three independent input factors following a uniform distribution on $[-\pi,\, \pi]$, and it  provides three outputs (\cite{lamboni18a}). It is defined as follows:          
\begin{equation} \label{eq:ishi}        
\M(\bo{\x}) =  \left[ \begin{array}{c}     
\sin(\x_1)+ 7\sin^2(\x_2) + 0.1\, \x_3^4 \sin(\x_1) \\  
\sin(\x_1)+ 5.896\sin^2(\x_2) + 0.1 \,\x_3^4 \sin(\x_1) \\
\sin(\x_1)+ 6.494\sin^2(\x_2) + 0.125 \,\x_3^4 \sin(\x_1)
\end{array}   
\right]             
\, .              
\end{equation}   
For this function, all the input factors are important regarding the whole outputs. 
The true values of the GSIs and the estimates of the proxy-measure are  listed in Table \ref{tab:ishim}.  
 
\begin{table}[ht]  
\centering  
\begin{tabular}{rrcccccc}
  \hline 
	  \hline     
		\multicolumn{1}{r}{} & \multicolumn{3}{c}{First-type proxy-measure}	&\multicolumn{1}{r}{} &	 \multicolumn{3}{c}{Second-type proxy-measure}\\       
	\cline{2-4}  	\cline{6-8}              
    & $GSI^F_{sup,u}$  & $U_u$ & $\widehat{U\!B_u^F}$ & & $GSI^{l2}_{sup,u}$ & $U_u^{l2}$ & $\widehat{U\!B_u^{l2}}$ \\     
 \cline{2-4}  	\cline{6-8}        
  $X_1$ & 0.628 & - & 1.921 (0.03) && 0.209 & - & 0.640 (0.01)  \\ 
  $X_2$ & 0.372 & - & 5.001 (0.06) &&0.125  & - & 1.666 (0.02)  \\ 
 $X_3$  & 0.284 & - & 1.014 (0.01) &&0.093  & - & 0.338 (0.004)  \\  
 \hline                    
	\multicolumn{2}{r}{} & \multicolumn{2}{c}{} 	&\multicolumn{1}{r}{} &	\multicolumn{1}{r}{}	&	 \multicolumn{2}{c}{} \\                    
	  \cline{2-4}  	\cline{6-8}           
$X_1:X_2$ &  0     & - & 7e-20 (3e-19)  &&  0  & - &  2.4e-20 (e-19)  \\ 
$X_1:X_3$ &  0.284 & - & 3.076 (0.03)   &&  0.093  & - & 1.025 (0.01)  \\        
$X_2:X_3$ &  0     & - & 4e-20 (2e-19)  &&0    & - & 1.2e-20 (6e-20) \\   
   \hline                  
	  \hline                                  
\end{tabular}                              
\caption{True generalized sensitivity indices and the estimates of the proxy-measure of these indices (average over $30$ replications, followed by their standard deviations in bracket) for the multivariate Ishigami function. While, the top part of this table focuses on the total indices, the bottom part deals with the total-interaction indices. For concise reporting of small values, we use e-a for $10^{-a}$.}          
\label{tab:ishim}
\end{table}                                                              
                
\subsubsection{Block-additive function ($d=6$, $\NN=1$)}   
The block-additive function includes six independent inputs following a uniform distribution on $[-1,\, 1]$, and it provides one output (\cite{roustant14}). It is defined as follows:
\begin{equation} \label{eq:pexpo}
    \M(\bo{\x}) = \cos(-0.8 -1.1 \x_1 + 1.1 \x_5 + \x_3)  + 
		\sin(0.5 +0.9 \x_4 + \x_2 -1.1 \x_6) 		\, .    
\end{equation} 
This function has been used to illustrate the improved Poincar\'e inequality in \cite{roustant14}. We used it for a comparison reason and for the fact that all the input factors are important. The true values of Sobol' indices and the estimates of the proxy-measure are  listed in Table \ref{tab:blockso}.             

\begin{table}[ht] 
\centering  
\begin{tabular}{rrcccccc}    
  \hline 
	  \hline     
		\multicolumn{1}{r}{} & \multicolumn{3}{c}{First-type proxy-measure}	&\multicolumn{1}{r}{} &	 \multicolumn{3}{c}{Second-type proxy-measure}\\          
	\cline{2-4}  	\cline{6-8}              
    & $GSI^F_{sup,u}$  & $U_u$ & $\widehat{U\!B_u^F}$ & & $GSI^{l2}_{sup,u}$ & $U_u^{l2}$ & $\widehat{U\!B_u^{l2}}$ \\     
 \cline{2-4}  	\cline{6-8}     
  $X_1$ & 0.231 & 0.329 & 0.271 (0.004) && 0.231 & 0.329 & 0.271 (0.004) \\    
  $X_2$ & 0.214 & 0.285 & 0.238 (0.003) && 0.214 & 0.285 & 0.238 (0.003) \\ 
	$X_3$ & 0.196 & 0.272 & 0.223 (0.003) && 0.196 & 0.272 & 0.223 (0.003) \\    
  $X_4$ & 0.176 & 0.231 & 0.192 (0.002) && 0.176 & 0.231 & 0.192 (0.002) \\ 
	$X_5$ & 0.231 & 0.329 & 0.270 (0.004) && 0.231 & 0.329 & 0.270 (0.004) \\    
  $X_6$ & 0.256 & 0.345 & 0.291 (0.004) && 0.256 & 0.345 & 0.291 (0.004)  \\  
  \hline           
\multicolumn{2}{r}{} & \multicolumn{2}{c}{} 	&\multicolumn{1}{r}{} &	\multicolumn{1}{r}{}	&	 \multicolumn{2}{c}{} \\                                     
   \cline{2-4}  	\cline{6-8}       
$X_1:X_2$ &  0     & 0 & 1.3e-19 (3e-19) && 0     & 0 & 1.3e-19 (3e-19) \\   
$X_1:X_3$ &  0.067 & 0.133 & 0.089 (0.001) && 0.067 & 0.133 & 0.089 (0.001) \\   
$X_1:X_4$ &  0     & 0 & 1.9e-19 (6e-19) && 0     & 0 & 1.9e-19 (6e-19) \\   
$X_1:X_5$ &  0.078 & 0.161 & 0.108 (0.001) && 0.078 & 0.161 & 0.108 (0.001) \\   
$X_1:X_6$ &  0     & 0 & 1.8e-18 (9e-18) && 0     & 0 & 1.8e-18 (9e-18)   \\ 
$X_2:X_3$ &  0     & 0 & 3e-18 (e-17) &&  0     & 0 & 3e-18 (e-17) \\   
$X_2:X_4$ &  0.040 & 0.085 & 0.054 (0.001) &&  0.040 & 0.085 & 0.054 (0.001) \\   
$X_2:X_5$ &  0     & 0 & 2.6e-19 (6e-19) &&  0     & 0 & 2.6e-19 (6e-19) \\   
$X_2:X_6$ &  0.053 & 0.127 & 0.080 (0.001) &&  0.053 & 0.127 & 0.080 (0.001)\\   
$X_3:X_4$ &  0     & 0 & 3e-19 (e-18) &&  0     & 0 & 3e-19 (e-18) \\   
$X_3:X_5$ &  0.067 & 0.133 & 0.089 (0.001) &&  0.067 & 0.133 & 0.089 (0.001) \\   
$X_3:X_6$ &  0     & 0 & 1.5e-18 (5e-18) &&   0     & 0 & 1.5e-18 (5e-18) \\     
$X_4:X_5$ &  0     & 0 & 4.4e-19 (9e-19) && 0     & 0 & 4.4e-19 (9e-19)  \\
$X_4:X_6$ &  0.046 & 0.103 & 0.065 (0.001) &&  0.046 & 0.103 & 0.065 (0.001) \\   
$X_5:X_6$ &  0     & 0 & 7.5e-19 (2e-18) &&  0     & 0 & 7.5e-19 (2e-18) \\         
  \hline                           
	  \hline                                             
\end{tabular}                    
\caption{True Sobol' indices and the estimates of the proxy-measure of these indices (average over $30$ replications, followed by their standard deviations in bracket) for the block-additive function. We also added the best (known) Poincar\'e upper bound $U_u$ (\cite{lamboni13,roustant14}).   While, the top part of this table focuses on the total indices, the bottom part deals with the total-interaction indices. For concise reporting of small values, we use e-a for $10^{-a}$.}
\label{tab:blockso}    
\end{table}

\subsubsection{Multivariate Sobol's function ($d=10$, $\NN=4$)}
The multivariate Sobol function includes 10 independent input factors following  a uniform distribution on $[0,\, 1]$ (\cite{lamboni18a}). It is defined as follows:
\begin{equation} \label{eq:gsobol}       
\M(\bo{\x}) =  \left[ \begin{array}{c} 
\prod_{j=1}^{d=10}\frac{ |4\, \x_j \,- \,2| \,+ \,\mathcal{A}[1,j]}{1 \,+\, \mathcal{A}[1,j]} \\ 
\prod_{j=1}^{d=10}\frac{ |4\, \x_j \,-\, 2| \,+ \,\mathcal{A}[2,j]}{1 \,+ \,\mathcal{A}[2,j]} \\ 
\prod_{j=1}^{d=10}\frac{ |4 \, \x_j \,- \,2|\, + \,\mathcal{A}[3,j]}{1 \,+\, \mathcal{A}[3,j]} \\ 
\prod_{j=1}^{d=10}\frac{ |4 \, \x_j \,- \,2| \, + \,\mathcal{A}[4,j]}{1 \,+\, \mathcal{A}[4,j]} 
\end{array}     
\right]    \, .          
\end{equation}          
                      
According to the values of $\mathcal{A}$ (matrix of type $4\times d$), this function has different properties.                  
If $$\mathcal{A} =\left[ \begin{array}{cccccccccc}
0 & 0 & 6.52 & 6.52 & 6.52 & 6.52 & 6.52 & 6.52 & 6.52 & 6.52 \\
0 & 1 & 4.5 & 9 & 99 & 99 & 99 & 99 & 99 & 99 \\ 
1 & 2 & 3 & 4 & 5 & 6 & 7 & 8 & 9 & 10 \\ 
50 & 50 & 50 & 50 & 50 & 50 & 50 & 50 & 50 & 50
\end{array}  \right]\, , $$ 
this function belongs to the class of functions having few important input factors. We can see that this function is continuous but it is differentiable almost everywhere.   
The true values of the GSIs and the estimates of the proxy-measure are listed in Table \ref{tab:sobA}.          
        
\begin{table}[ht]   
\centering  
\begin{tabular}{rrcccccc}
  \hline 
	  \hline     
		\multicolumn{1}{r}{} & \multicolumn{3}{c}{First-type proxy-measure}	&\multicolumn{1}{r}{} &	 \multicolumn{3}{c}{Second-type proxy-measure}\\       
	\cline{2-4}  	\cline{6-8}              
    & $GSI^F_{sup,j}$  & $U_j$ & $\widehat{U\!B_j^F}$ & & $GSI^{l2}_{sup,j}$ & $U_j^{l2}$ & $\widehat{U\!B_j^{l2}}$ \\     
 \cline{2-4}  	\cline{6-8}        
  $X_1$ & 0.605 & - & 2.419 (0.03) && 0.147 & - & 0.589 (0.007)   \\    
  $X_2$ & 0.406 & - & 1.626 (0.02) && 0.103 & - & 0.403 (0.005) \\ 
	$X_3$ & 0.034 & - & 0.134 (0.001)&& 0.008 & - & 0.032 (0.000)  \\    
  $X_4$ & 0.021 & - & 0.082 (0.001)&& 0.005 & - & 0.019 (0.000) \\ 
	$X_5$ & 0.014 & - & 0.057 (0.001) && 0.003 & - & 0.013 (0.000) \\    
  $X_6$ &  0.013 & - & 0.050 (0.001) &&0.003  & - & 0.012 (0.000) \\
	$X_7$ &  0.011 & - & 0.045 (0.000)&&0.003  & - & 0.010 (0.000) \\
	$X_8$ &  0.011 & - & 0.042 (0.001)&& 0.002 & - & 0.010 (0.000) \\
	$X_9$ &  0.010 & - & 0.039 (0.000)&& 0.002 & - & 0.009 (0.000) \\
	$X_{10}$ &  0.009 & - & 0.038 (0.001)&& 0.002 & - &0.009 (0.000) \\
  \hline                                                          
	   \hline                                                                      
\end{tabular}         
\caption{True total generalized sensitivity indices and the estimates of the proxy-measure of these indices (average over $30$ replications, followed by their standard deviations in bracket) for the multivariate Sobol function of type A.}    
\label{tab:sobA}        
\end{table}                                              
                
\subsection{Numerical results and discussion}  \label{sec:num2}     
Tables \ref{tab:ishi}-\ref{tab:sobA} report the estimated values of the proxy-measure of GSIs for different functions considered in this paper.   
From Tables \ref{tab:ishi}-\ref{tab:sobA}, it comes out that the proxy-measure values of GSIs and Sobol' indices are all the upper bounds of these indices, and they allow for identifying the important input factors and interaction among these inputs. While the prioritization of input factors using the proxy-measure is similar to the classification based on the GSIs for the block-additive function and the multivariate Sobol function; we note some difference in the case of Ishigami' functions (Tables \ref{tab:ishi} and \ref{tab:ishim}). Indeed, the proxy-measure identifies $\X_2$ as the most important input factor for both Ishigami'  functions. \\ 
                
For the Ishigami function in Table \ref{tab:ishi}, it appears that our proxy-measure (upper bound) improves the classical upper bound ($U_u$) from \cite{lamboni13, roustant14}. We obtained similar results for the block-additive function (Table \ref{tab:blockso}). For the block-additive function, our proxy-measure values are very close to the total and the total-interaction indices.\\
                  
 The second-type  proxy-measure of GSIs, which is equivalent to the first-type ones in the case of single-response models ($\NN=1$), provides the same information but with different proxy-measure values in the case of the multivariate response models ($\NN >1$). Although both proxy-measures provide the same information for the functions considered in this paper, the second-type proxy-measure accounts for the correlation among the components of the total-effect and total-interaction functionals, and it should be preferred in practice.      
        
\section{Conclusion} \label{sec:con}  
First, we propose two derivative-based expressions of the variance and new integral inequalities
for any function and for a wide class of Borel probability measures. For a function $\R \to \R^\NN$, we provide the optimal weighted Poincar\'e-type inequality. The proposed expressions of the variance and inequalities make use of the gradient and/or cross-partial derivatives, cumulative distribution functions, and probability density functions. They are based on a novel decomposition of multivariate functions developed in this paper. Second, the new weighted Poincar\'e-type inequalities are used to establish a new proxy-measure of the generalized sensitivity indices from multivariate sensitivity analysis, including Sobol' indices. The new proxy-measure of GSIs is an upper bound of the total or total-interaction GSIs, and we have shown that this new proxy-measure coincides with the total and total-interaction GSIs for a specific class of functions. For this class of functions, the new proxy-measure will give the right ranking of input factors using few model evaluations. Third, we construct unbiased and consistent estimators of both types of the proxy-measure.\\  
                                                                        
The numerical tests confirmed that our proxy-measure of the total and total-interaction GSIs improves the classical upper bound from the Poincar\'e-type inequalities (\cite{kucherenko09,lamboni13,roustant14}) for the functions considered in this paper. In the next future, it is interesting to investigate i) the integral equality and inequalities in presence of dependent input factors, ii) the proxy-measure in some given directions such as the directions driven by the model gradient.   
                                                 
\section*{Acknowledgements}
We would like to thank the two referees for their comments and suggestions that 
have helped improving this paper.


\appendix
       
\section{Proof of Theorem \ref{lem:deriaov}}\label{app:deriaov}
For Point (i), when multiplying Equation (\ref{eq:ited}) by the joint PDF $\rho_{u}(\bo{\y}_{u})$, and integrating it over the joint support $\Omega_u$, we obtain   
\begin{eqnarray}   \label{eq:wited} 
 \M(\bo{\z}_{u}, \bo{\x}_{\sim u}) - \int_{\Omega_u} \M(\bo{\y}_{u}, \bo{\x}_{\sim u}) \, d\mu(\bo{\y}_{u})
&=& 
 \sum_{\substack{v,\, v \subseteq u \\ |v|>0}} \int_{\Omega_v} \int_{\bo{\y}_{v}}^{\bo{\z}_{v}}  
 \frac{\partial ^{|v|}\M}{\partial \bo{\x}_{v}}(\bo{x}) \rho_{v}(\bo{\y}_{v})\, d\bo{\x}_{v} d\bo{\y}_{v} \, ,                             
 \end{eqnarray}
      
Consider the measurable and differentiable function   $\bo{\rr} : \Omega_{u} \to [0,\, 1]^{|u|}$ given by $\rr_{j}(\bo{\x}_u) =\frac{\x_{j}-\y_{j}}{\z_{j}-\y_{j}}$ for $j =1,\, \ldots, \, |u|$. A change of variables gives    
                    
\begin{eqnarray}   \label{eq:wited2} 
 \M(\bo{\z}_{u}, \bo{\x}_{\sim u}) - \int_{\Omega_u} \M(\bo{\y}_{u}, \bo{\x}_{\sim u}) \, d\mu(\bo{\y}_{u})
&=& 
 \sum_{\substack{v,\, v \subseteq u \\ |v|>0}} \int_{\Omega_v} \int_{[0,\,1]^{|u|}}  
 \frac{\partial ^{|v|}\M}{\partial \bo{\x}_{v}}\left[(\rr_j (\z_j -\y_j) + \y_j, \, j \in v),\, \bo{\x}_{\sim v}\right]  \nonumber \\     
& & \times 
   \prod_{i \in v}\left(\z_i -\y_i \right) \rho_{v}(\bo{\y}_{v}) \, d\bo{\rr} d\bo{\y}_{v} \, .                             
 \end{eqnarray}

Replacing $\bo{\z}_{u}$ with $\bo{\x}_{u}$ in Equation (\ref{eq:wited2}) yields to  

\begin{eqnarray}   \label{eq:kubi32} 
 \M(\bo{\x}_{u}, \bo{\x}_{\sim u}) - \int_{\Omega_u} \M(\bo{\y}_{u}, \bo{\x}_{\sim u}) \, d\mu(\bo{\y}_{u})
&=& 
 \sum_{\substack{v,\, v \subseteq u \\ |v|>0}} \int_{\Omega_v} \int_{[0,\,1]^{|u|}}  
 \frac{\partial ^{|v|}\M}{\partial \bo{\x}_{v}}\left[(\rr_j (\x_j -\y_j) + \y_j, \, j \in v),\, \bo{\x}_{\sim v}\right]  \nonumber \\     
& & \times 
   \prod_{i \in v}\left(\x_i -\y_i \right) \rho_{v}(\bo{\y}_{v}) \, d\bo{\rr} d\bo{\y}_{v} \, .     \nonumber                          
 \end{eqnarray}                  

Consider the measurable and differentiable function $\bo{\ttt}:\, \Omega_u \to \Omega_u$ given by $\ttt_{j} (\bo{\y}) =\rr_{j} \x_{j} + \y_{j}(1-\rr_{j})$ for $j =1,\,2,\, \ldots, \, |u|$.  We can see that $\y_{j} = \frac{\ttt_{j} -\rr_{j} \x_{j}}{1-\rr_{j}}$, $d\y_{j} =\frac{d\ttt_{j}}{1-\rr_{j}}$, and $\x_{j}-\y_{j} = \frac{\x_{j}-\ttt_{j}}{1-\rr_{j}}$.   
 Bearing in mind Fubini-Lebesgue's theorem, a change of variables gives                  
\begin{eqnarray}   \label{eq:kubi4}  
\M(\bo{\x}_{u}, \bo{\x}_{\sim u}) - \int_{\Omega_u} \M(\bo{\y}_{u}, \bo{\x}_{\sim u}) \, d\mu(\bo{\y}_{u})
&=&      
 \sum_{\substack{v,\, v \subseteq u \\ |v|>0}} \int_{\Omega_v}  
 \frac{\partial ^{|v|}\M}{\partial \bo{\x}_{v}}\left(\bo{\ttt}_v,\, \bo{\x}_{\sim v}\right)  \nonumber \\     
& & \times      \int_{[0,\,1]^{|u|}} 
   \prod_{i \in v}\frac{\left(\x_{i}-\ttt_{i}\right)}{(1-\rr_{i})^2} \rho_{v}\left(\frac{\ttt_{j} -\rr_{j} \x_{j}}{1-\rr_{j}},\, j\in v\right) \, d\bo{\rr} d\bo{\ttt}_{v} \, . 
 \nonumber               
\end{eqnarray}    
                                
  Again, a change of variables of the form $\z_j(\bo{\rr})= \frac{\ttt_{j} -\rr_{j} \x_{j}}{1-\rr_{j}}$  (with $d\z_{j} =\frac{\ttt_{j}-\x_j}{(1-\rr_{j})^2} d\rr_{j}$, $\z_{j}(0)=\ttt_{j}$, and $\z_{j}(1)=\ttt_{j}\indic_{[\ttt_{j} =\x_j]} + \infty \indic_{[\ttt_{j} > \x_j]} -\infty \indic_{[\ttt_{j} <\x_j]} $) 	gives     
	   
\begin{eqnarray}   \label{eq:kubi5}  
\M(\bo{\x}_{u}, \bo{\x}_{\sim u}) - \int_{\Omega_u} \M(\bo{\y}_{u}, \bo{\x}_{\sim u}) \, d\mu(\bo{\y}_{u})
&=&  
 \sum_{\substack{v,\, v \subseteq u \\ |v|>0}}   \int_{\Omega_v}   
 \frac{\partial ^{|v|}\M}{\partial \bo{\x}_{v}}\left(\bo{\ttt}_v,\, \bo{\x}_{\sim v}\right)
 \int_{A_j,\, j \in v}  \nonumber   \\
& & \times  \prod_{i \in v}\left( \indic_{[\ttt_i < \x_i]} -\indic_{[\ttt_i \geq \x_i]} \right)  
    \rho_{v}\left( \bo{\z}_v \right) \, d\bo{\z}_v d\bo{\ttt}_{v} \, ,  \nonumber  
\end{eqnarray}       
with
$
A_j = \{\ttt_j\}\times \indic_{[\ttt_j =\x_j]} + [\ttt_j,\, +\infty [ \times  \indic_{[ \ttt_j  > \x_j]} + ]-\infty,\, \ttt_j] \times  \indic_{[ \ttt_j <  \x_j]}   
$.    \\              
		   
As the set $\{\ttt_j\}$ is negligible with respect to all probability measures that are absolutely continuous w.r.t. the Lebesgue measure, we consider only (in what follows)
$ 
A_j = [\ttt_j,\, +~\infty [ \times  \indic_{[ \ttt_j  \geq \x_j]} + ]-~\infty,\, \ttt_j] \times  \indic_{[ \ttt_j <  \x_j]}$ almost surely.	Bearing in mind the independent assumption (A1), we can write  

\begin{eqnarray} 
\int_{A_j,\, j \in v} \prod_{j \in v}	\left(\indic_{[ \ttt_j < \x_j]} -\indic_{[\ttt_j \geq \x_j]} \right) 
   \rho_{v}\left( \bo{\z}_v \right) \, d\bo{\z}_v 
 &=&               
 \prod_{j \in v}\int_{A_j}	\left( \indic_{[ \ttt_j < \x_j]} -\indic_{[\ttt_j \geq \x_j]} \right) 	\rho_j(\z_j)\, d\z_j \nonumber \\  
	&=&  \prod_{j \in v} \left(F_j (\ttt_{j}) - \indic_{[\ttt_{j} \geq \x_{j}]}\right) 
	\, ,    \nonumber                          
\end{eqnarray}                         
and the first results follows $\forall\, u\subseteq \{1,\,2,\, \ldots,\, d\}$ in general and for $u=\{1,\,2,\, \ldots,\, d\}$ in particular. \\   
      
For Point (ii), Equations (\ref{eq:center}) is obvious knowing that $\int_{\Omega_j} \left(F_j (\ttt_{j}) - \indic_{[\ttt_{j} \geq \x_{j}]}\right)\, d\mu(\x_j)  =0$. \\
    
For Point (iii), knowing that $\var(\M) =\int \M \M^\T \, d\mu-\int \M\, d\mu \int\M^\T\, d\mu$ and bearing in mind Fubini's theorem, we have         
\begin{eqnarray}     
\var(\gtef_v) &=& \int_{\Omega \times \Omega_v^2} 
 \frac{\partial^{|v|} \M}{\partial \bo{\x}_{v}}\left(\bo{\x}'_{v},\bo{\x}_{\sim v}\right) \frac{\partial^{|v|} \M^\T}{\partial \bo{\x}_{v}}\left(\bo{\z}'_{v},\bo{\x}_{\sim v}\right)  \prod_{j \in v} \frac{\left(F_{j}(\x_{j}') - \indic_{[\x_{j}' \geq \x_{j}]}\right)\left( F_{j}(\z_{j}') - \indic_{[\z_{j}' \geq \x_{j}]} \right)}{\rho_{j} (\x_{j}') \rho_{j} (\z_{j}') }  \nonumber \\
  & &  \times  d\mu(\bo{\x})  d\mu(\bo{\x}'_v)  d\mu(\bo{\z}'_v)    \nonumber \\
	&=& \int_{\Omega \times \Omega_v}   
 \frac{\partial^{|v|} \M}{\partial \bo{\x}_{v}}\left(\bo{\x}'_{v},\bo{\x}_{\sim v}\right) \frac{\partial^{|v|} \M^\T}{\partial \bo{\x}_{v}}\left(\bo{\z}'_{v},\bo{\x}_{\sim v}\right)  \prod_{j \in v} \frac{F_{j}\left(\min(\x_{j}',\,\z_{j}')\right) -F_{j}(\x_{j}')F_{j}(\z_{j}')}{\rho_{j} (\x_{j}')\rho_{j} (\z_{j}')} \nonumber \\
 & & \times  d\mu(\bo{\x}_{\sim v})  d\mu(\bo{\x}'_v)  d\mu(\bo{\z}'_v) \nonumber \\
	&=& \int_{\Omega \times \Omega_v} 
	\frac{\partial^{|v|} \M}{\partial \bo{\x}_{v}}\left(\bo{\x}\right) \frac{\partial^{|v|} \M^\T}{\partial \bo{\x}_{v}}\left(\bo{\x}'_{v},\bo{\x}_{\sim v}\right)  \prod_{j \in v} \frac{F_{j}\left(\min(\x_{j},\,\x_{j}')\right) -F_{j}(\x_{j})F_{j}(\x_{j}')}{\rho_{j} (\x_{j})\rho_{j} (\x_{j}')} \, d\mu(\bo{\x})  d\mu(\bo{\x}'_v) \, , \nonumber 
\end{eqnarray}               
as         
$$
\int_{\Omega_v} 
\prod_{k \in v} \frac{F_{k}(\x_{k}') - \indic_{[\x_{k}' \geq \x_{k}]}}{\rho_{k} (\x_{k}') }
 \frac{F_{k}(\z_{k}') - \indic_{[\z_{l}' \geq \x_{k}]}}{\rho_{k} (\z_{k}')} d\mu(\bo{\x}_{v})
= \prod_{k \in v} \frac{F_{k}\left(\min(\x_{k}',\,\z_{k}')\right) -F_{k}(\x_{k}')F_{k}(\z_{k}')}{\rho_{k} (\x_{k}')\rho_{k} (\z_{k}')}
\, .    
$$  

For Point (iv), using Equation (\ref{eq:deriaov}), we can write    
\begin{eqnarray}                                       
\var(\M)  & = & \sum_{\substack{v,\, v \subseteq \{1,\,\ldots,\, d\} \\ |v|>0}}
\sum_{\substack{w,\, w \subseteq \{1,\,\ldots,\, d\} \\ |w|>0}} \int_{\Omega \times \Omega_v\times \Omega_w}
 \frac{\partial^{|v|} \M}{\partial \bo{\x}_{v}}\left(\bo{\x}'_{v},\bo{\x}_{\sim v}\right) 
\frac{\partial^{|w|} \M^\T}{\partial \bo{\x}_{w}}\left(\bo{\z}'_{w},\bo{\x}_{\sim w}\right)   \nonumber \\             
& & \times
\prod_{j \in v} \frac{F_{j}(\x_{j}') - \indic_{[\x_{j}' \geq \x_{j}]}}{\rho_{j} (\x_{j}') }
 \prod_{k \in w} \frac{F_{k}(\z_{k}') - \indic_{[\z_{k}' \geq \x_{j}]}}{\rho_{k} (\z_{k}') }  \, d\mu(\bo{\x}'_v)d\mu(\bo{\z}'_w)d\mu(\bo{\x})  \nonumber \\ 
 & =&     
\sum_{\substack{v,\, v \subseteq \{1,\,\ldots,\, d\} \\ |v|>0}}
\int_{\Omega \times \Omega_v\times \Omega_w} 
	\frac{\partial^{|v|} \M}{\partial \bo{\x}_{v}}\left(\bo{\x}\right) \frac{\partial^{|v|} \M^\T}{\partial \bo{\x}_{v}}\left(\bo{\x}'_{v},\bo{\x}_{\sim v}\right)  \prod_{j \in v} \frac{F_{j}\left(\min(\x_{j},\,\x_{j}')\right) -F_{j}(\x_{j})F_{j}(\x_{j}')}{\rho_{j} (\x_{j})\rho_{j} (\x_{j}')} \, d\mu(\bo{\x})  d\mu(\bo{\x}'_v)   \nonumber \\ 
& &  +          
\sum_{\substack{v,\, v \subseteq \{1,\,\ldots,\, d\} \\ |v|>0}} 
\sum_{\substack{w,\, w \subseteq \{1,\,\ldots,\, d\} \\ w \neq v \\ |w|>0}} \int_{\Omega \times \Omega_v\times \Omega_w} 
\frac{\partial^{|v|} \M}{\partial \bo{\x}_{v}}\left(\bo{\x}'_{v},\bo{\x}_{\sim v}\right) 
\frac{\partial^{|w|} \M^\T}{\partial \bo{\x}_{w}}\left(\bo{\z}'_{w},\bo{\x}_{\sim w}\right)   \nonumber \\    
& & \times   
\prod_{j \in v} \frac{F_{j}(\x_{j}') - \indic_{[\x_{j}' \geq \x_{j}]}}{\rho_{j} (\x_{j}') }
 \prod_{k \in w} \frac{F_{k}(\z_{k}') - \indic_{[\z_{k}' \geq \x_{k}]}}{\rho_{k} (\z_{k}') }  \, d\mu(\bo{\x}'_v)d\mu(\bo{\z}'_w)d\mu(\bo{\x})   
 \, .  \nonumber       
\end{eqnarray}

\section{Proof of Theorem \ref{theo:deriaov1}}\label{app:deriaov1}  
The proof makes use of the well-known Sobol-Hoeffding's decomposition (\cite{hoeffding48a,efron81,sobol93}), that is, for a function $\M : \R^d \to \R^\NN$ satisfying assumptions (A1)-(A2), we have 
        
$$   \M(\bo{\x})   =  \M_\emptyset + \sum_{\substack{u,\,u \subseteq \{1,2, \ldots d\}\\|u|>0}} \M_u(\bo{\x}_u)\, ,      
$$   
 with                
$$\displaystyle \M_u(\bo{\x}_u) =  \sum_{w,\, w\subseteq u} (-1)^{|u|-|w|}\int_{\Omega_{\sim w}} \M(\bo{\x})\, d\mu(\bo{\x}_{\sim w})  \, .  
$$   
    
Using the derivative-based decomposition of $\M$ in  (\ref{eq:deriaov})
and knowing that $\sum_{w,\, w\subseteq u}  (-1)^{|u|-|w|} \M_{\emptyset} =\bo{0}$, we have  
\begin{eqnarray}  
\M_u(\bo{\x}_u)  &=&      
\sum_{w,\, w\subseteq u} \sum_{\substack{v,\, v \subseteq \{1,\,\ldots,\, d\}\\|v|>0}} (-1)^{|u|-|w|}  \int_{\Omega_{\sim w}}  \gtef_v(\bo{\x}) \, d\mu(\bo{\x}_{\sim w})  \, . 
 \end{eqnarray} 
     
From (\ref{eq:center}), $\int_{\Omega_{\sim w}}  \gtef_v(\bo{\x}) \, d\mu(\bo{\x}_{\sim w}) =0$ $\forall \, j \in v \cap \bar{w}$, or equivalently for the set $\{ v \subseteq \{1,\,\ldots,\, d\} :  v \cap \bar{w} \neq \emptyset \, \text{and} \,  |v|>0 \}$. Thus, the expectation of $\M_v(\bo{\x})$ with respect to $\bo{\x}_{\sim w}$ does not vanish for $\{v \subseteq \{1,\,\ldots,\, d\} : \, v \cap \bar{w} = \emptyset \, \text{and} \,  |v|>0 \} \equiv \{v\subseteq w :  |v|>0 \}$. Therefore, we can write 
\begin{eqnarray}  
\M_u(\bo{\x}_u)  &=&   
\sum_{w,\, w\subseteq u} \sum_{\substack{v,\, v \subseteq w \\|v|>0}} (-1)^{|u|-|w|}  \int_{\Omega_{\sim w}}  \gtef_v(\bo{\x}) \, d\mu(\bo{\x}_{\sim w})  \nonumber \\
 & = &  \sum_{w,\, w\subseteq u} \sum_{\substack{v,\, v \subseteq w \\|v|>0}} (-1)^{|u|-|w|}  \int_{\Omega_{\sim w} \times \Omega_v}   
 \frac{\partial^{|v|} \M}{\partial \bo{\x}_{v}}\left(\bo{\x}'_{v},\bo{\x}_{\sim v}\right)  \prod_{j \in v} \frac{F_{j}(\x_{j}') - \indic_{[\x_{j}' \geq \x_{j}]}}{\rho_{j} (\x_{j}') }  \, d\mu(\bo{\x}'_v) d\mu(\bo{\x}_{\sim w})  \nonumber \\
 &=&   \sum_{w,\, w\subseteq u} \sum_{\substack{v,\, v \subseteq w \\|v|>0}} (-1)^{|u|-|w|}  \int_{\Omega_{\sim w} \times \Omega_v}   
 \frac{\partial^{|v|} \M}{\partial \bo{\x}_{v}}\left(\bo{\x}'_{v}, \bo{\x}_{w \backslash v}, \bo{\x}_{\sim w}\right)  \prod_{j \in v} \frac{F_{j}(\x_{j}') - \indic_{[\x_{j}' \geq \x_{j}]}}{\rho_{j} (\x_{j}') }  \, d\mu(\bo{\x}'_v) d\mu(\bo{\x}_{\sim w})  \nonumber \\           
 &=&    \sum_{w,\, w\subseteq u} \sum_{\substack{v,\, v \subseteq w \\|v|>0}} (-1)^{|u|-|w|}  \int_{\Omega_{\sim w} \times \Omega_v}   
 \frac{\partial^{|v|} \M}{\partial \bo{\x}_{v}}\left(\bo{\x}'_{v}, \bo{\x}_{w \backslash v}, \bo{\x}_{\sim w}'\right)  \prod_{j \in v} \frac{F_{j}(\x_{j}') - \indic_{[\x_{j}' \geq \x_{j}]}}{\rho_{j} (\x_{j}') }  \, d\mu(\bo{\x}'_v) d\mu(\bo{\x}_{\sim w}')  \nonumber \\           
 &=&  \sum_{w,\, w\subseteq u} \sum_{\substack{v,\, v \subseteq w \\|v|>0}} (-1)^{|u|-|w|}  \int_{\Omega}   
 \frac{\partial^{|v|} \M}{\partial \bo{\x}_{v}}\left(\bo{\x}'_{v}, \bo{\x}_{w \backslash v}, \bo{\x}_{\sim w}'\right)  \prod_{j \in v} \frac{F_{j}(\x_{j}') - \indic_{[\x_{j}' \geq \x_{j}]}}{\rho_{j} (\x_{j}') }  \, d\mu(\bo{\x}')  \nonumber    
 \end{eqnarray}    
       
Points (ii) and Point (iii) are the consequences of the Hoeffding decomposition such as the orthogonality of $\M_u$.

\section{Proof of Lemma \ref{lem:ex}}\label{app:ex2}  
Without loss of generality, we suppose that $\boldsymbol{\alpha}=\bo{0}$. \\

The derivation of Point (i) is similar to a proof done in \cite{lamboni19}. We include that derivation for completeness reason and for interested readers. First, it is known that (\cite{lamboni19})          
$$ 
\tef_{u \backslash v}(\bo{\x})   = \sum_{\substack{v_1,\, v_1 \subseteq (u \backslash v) \\ |v_1|>0}} 
\int_{\Omega_u}   
   \frac{\partial^{|v_1|} \M}{\partial \bo{\x}_{v_1}}\left(\bo{\x}_{v_1}',\bo{\x}_{\sim v_1}\right) \prod_{j \in v_1} \frac{F_{j}(\x_{j}') - \indic_{[\x_{j}' \geq \x_{j}]} }{\rho_{j} (\x_{j}') } \, d\mu(\bo{\x}_{v_1}') \, . 
$$  

Second, bearing in mind the above expression of $\tef_{u \backslash v}(\bo{\x}) $, we can write  
\begin{eqnarray}                      
 \M_u^{sup}(\bo{\x}) &=& \sum_{\substack{v,\, v \subset u}}  
 (-1)^{|u|-|v| +1} \tef_{u \backslash v}(\bo{\x})  \nonumber \\   
 &=& \sum_{\substack{v,\, v \subset u}}      
 (-1)^{|u|-|v|+1}             
\sum_{\substack{v_1,\, v_1 \subseteq (u \backslash v) \\ |v_1|>0}} 
\int_{\Omega_u} 
   \frac{\partial^{|v_1|} \M}{\partial \bo{\x}_{v_1}}\left(\bo{\x}_{v_1}',\bo{\x}_{\sim v_1}\right) \prod_{j \in v_1} \frac{F_{j}(\x_{j}') - \indic_{[\x_{j}' \geq \x_{j}]} }{\rho_{j} (\x_{j}') } \, d\mu(\bo{\x}_{v_1}') \nonumber \\   
&=&  \sum_{\substack{v,\, v \subset u}} 
\sum_{\substack{v_1,\, v_1 \subseteq (u \backslash v) \\ |v_1|>0}} 
 (-1)^{|u|-|v|+1}   
\int_{\Omega_u} 
   \frac{\partial^{|v_1|} \M}{\partial \bo{\x}_{v_1}}\left(\bo{\x}_{v_1}',\bo{\x}_{\sim v_1}\right) \prod_{j \in v_1} \frac{F_{j}(\x_{j}') - \indic_{[\x_{j}' \geq \x_{j}]} }{\rho_{j} (\x_{j}') } \, d\mu(\bo{\x}_{v_1}')  
 \, .     \nonumber        
\end{eqnarray}           
For a given $v_1 \subseteq u$ with $v_1 \neq \emptyset$, we can see that $\left\{v,\, v_1\subseteq (u \backslash v) \right\} \equiv \left\{v=u \backslash t,\, v_1\subseteq t \subseteq u \right\}$. Thus, we have       
\begin{eqnarray}     
 \M_u^{sup}(\bo{\x}) &=& \sum_{\substack{v_1,\, v_1 \subseteq u \\ |v_1|>0}} 
\sum_{\substack{t,\, v_1 \subseteq t \subseteq u}}     
 (-1)^{|t|+1} \int_{\Omega_u} 
   \frac{\partial^{|v_1|} \M}{\partial \bo{\x}_{v_1}}\left(\bo{\x}_{v_1}',\bo{\x}_{\sim v_1}\right) \prod_{j \in v_1} \frac{F_{j}(\x_{j}') - \indic_{[\x_{j}' \geq \x_{j}]} }{\rho_{j} (\x_{j}') } \, d\mu(\bo{\x}_{v_1}')  
 \nonumber \\ 
&=& (-1)^{|u|+1} \int_{\Omega_u} 
   \frac{\partial^{|u|} \M}{\partial \bo{\x}_{u}}\left(\bo{\x}_{u}',\bo{\x}_{\sim u}\right) \prod_{j \in u} \frac{F_{j}(\x_{j}') - \indic_{[\x_{j}' \geq \x_{j}]} }{\rho_{j} (\x_{j}') } \, d\mu(\bo{\x}_{u}')  
   \, ,     \nonumber         
\end{eqnarray}                           
due to the fact that                      
\begin{eqnarray}   
\sum_{\substack{t,\, v_1 \subseteq t \subseteq u}}     
 (-1)^{|t|+1} &= &\sum_{\substack{|t| = |v_1|}}^{|u|} \binom{|u|-|v_1|}{|t|-|v_1|}(-1)^{|t|+1}  = \sum_{\substack{i = 0}}^{|u|-|v_1|} \binom{|u|-|v_1|}{i}(-1)^{i+|v_1|+1} \nonumber  \\
   &=& (-1)^{|u|+1} \indic_{\left[v_1 = u \right]}	   \, . \nonumber 
\end{eqnarray} 

For Point (ii), it is obvious that $\M_u^{sup}(\bo{x}) = \sum_{\substack{v,\, v \subset u}}  
 (-1)^{|u|-|v| +1} \tef_{u \backslash v}(\bo{\x})$ is differentiable w.r.t. $\bo{x}_u$, as it is the case of $\tef_{u \backslash v}$. Bearing in mind the property of $\tef_{u}(\bo{\x})$ in (\ref{eq:proptef}), we have   
\begin{eqnarray} 
\frac{\partial^{|u|} \M_u^{sup}}{\partial \bo{\x}_{u}}(\bo{\x}) &=& \sum_{\substack{v,\, v \subset u}}  
 (-1)^{|u|-|v| +1} \frac{\partial^{|u|} \tef_{u \backslash v}}{\partial \bo{\x}_{u}}(\bo{\x}) \nonumber \\        
 &=& \sum_{\substack{v,\, v \subset u}}  
 (-1)^{|u|-|v| +1} \frac{\partial^{|u|} \M}{\partial \bo{\x}_{u}}(\bo{\x}) \nonumber \\      
 &=&  \frac{\partial^{|u|} \M}{\partial \bo{\x}_{u}}(\bo{\x}) \, ,
\nonumber
\end{eqnarray}        
as  
$$              
\sum_{\substack{v,\, v \subset u}} (-1)^{|u|-|v| +1}
= (-1)^{|u|+1}\left(\sum_{k=0}^{|u|}\binom{|u|}{k}(-1)^k - (-1)^{|u|} \right) =1 \, .
$$       
The last result holds by replacing $\frac{\partial^{|u|} \M}{\partial \bo{\x}_{u}}(\bo{\x})$ with $\frac{\partial^{|u|} \M_u^{sup}}{\partial \bo{\x}_{u}}(\bo{\x})$. 
   
\section{Proof of Corollary \ref{coro:equa}}\label{app:equa}  
 Without loss of generality, we suppose that $\boldsymbol{\alpha}=\bo{0}$. \\

Point (i) is obvious, as it is a consequence of Theorem \ref{lem:deriaov}. \\

Point (ii) is obtained by replacing $\frac{\partial^{|u|} \M}{\partial \bo{\x}_{u}}(\bo{\x})$ with $\frac{\partial^{|u|} \tefg}{\partial \bo{\x}_{u}}(\bo{\x})$. \\

Bearing in mind Point (ii), assumptions (A1) and (A7) and the Fubini-Lebesgue theorem, Point (iii) is derived as follows: 
\begin{eqnarray}  
\var(\gtef)  & = &                                
\int_{\Omega\times \Omega_u} 
\frac{\partial^{|u|} \gtef}{\partial \bo{\x}_{u}}\left(\bo{\x}\right) 
\frac{\partial^{|u|} \gtef^\T}{\partial \bo{\x}_{u}}\left(\bo{\x}'_{u} ,\bo{\x}_{\sim u}\right) 
\prod_{k \in u} \frac{F_{k}\left[\min(\x_{k},\,\x_{k}')\right] -F_{k}(\x_{k})F_{k}(\x_{k}')}{\rho_{k} (\x_{k})\rho_{k} (\x_{k}')}  \, \nonumber \\ 
 &  &    \times        
d\mu(\bo{\x}) d\mu(\bo{\x}'_u)                                 
\,  ,  \nonumber    \\
 &= & \int_{\Omega\times \Omega_u} 
\frac{\partial^{|u|} \gtef}{\partial \bo{\x}_{u}}\left(\bo{\x}\right)
\frac{\partial^{|u|} \gtef^\T}{\partial \bo{\x}_{u}}\left(\bo{\x}\right) \frac{\rho_u(\bo{x}_u')}{\rho_u(\bo{x}_u)}  
\prod_{k \in u} \frac{F_{k}\left[\min(\x_{k},\,\x_{k}')\right] -F_{k}(\x_{k})F_{k}(\x_{k}')}{\rho_{k} (\x_{k})\rho_{k} (\x_{k}')}  \, \nonumber \\ 
 &  &    \times        
d\mu(\bo{\x}) d\mu(\bo{\x}'_u)                                 
\,  ,  \nonumber    \\
 &= & \int_{\Omega\times \Omega_u} 
\frac{\partial^{|u|} \gtef}{\partial \bo{\x}_{u}}\left(\bo{\x}\right)
\frac{\partial^{|u|} \gtef^\T}{\partial \bo{\x}_{u}}\left(\bo{\x}\right)
\prod_{k \in u} \frac{F_{k}\left[\min(\x_{k},\,\x_{k}')\right] -F_{k}(\x_{k})F_{k}(\x_{k}')}{\left[\rho_{k} (\x_{k})\right]^2} \, d\mu(\bo{\x}) d\mu(\bo{\x}'_u)         
\,  ,  \nonumber    \\    
 &=&    \frac{1}{2^{|u|}} 
\int 
\frac{\partial^{|u|} \gtef}{\partial \bo{\x}_{u}}\left(\bo{\x}\right) 
\frac{\partial^{|u|} \gtef^\T}{\partial \bo{\x}_{u}}\left(\bo{\x}\right) 
\prod_{k \in u} \frac{F_{k}(\x_{k})\left[1-F_{k}(\x_{k})\right]}{\left[\rho_{k} (\x_{k})\right]^2}  d\mu(\bo{\x}) \,  ,   \nonumber  
\end{eqnarray}    
as         
\begin{eqnarray}  
\int _{\Omega_k}
F_{k}\left[\min(\x_{k},\,\x_{k}')\right] -F_{k}(\x_{k})F_{k}(\x_{k}') d\mu(\bo{\x}_k') &=& \int_{-\infty}^{x_k} F_{k}(\x_{k}') \left[1 -F_{k}(\x_{k})\right] \rho(x_k') \, dx_k'\nonumber \\ 
   & & + \int_{x_k}^{+\infty} F_{k}(\x_{k}) \left[1 -F_{k}(\x_{k}')\right]  \rho(x_k') \, dx_k' \nonumber \\ 
	 &=& \frac{1}{2}\left[1 -F_{k}(\x_{k})\right]F_{k}(\x_{k})^2 \nonumber \\ 
	 && + F_{k}(\x_{k})\left(\frac{1}{2} - F_{k}(\x_{k}) + \frac{1}{2} F_{k}(\x_{k})^2\right) \nonumber \\       
	&=&    \frac{1}{2} F_{k}(\x_{k}) \left[1 -F_{k}(\x_{k})\right] \, . 
	\nonumber           
\end{eqnarray}             
                    
\section{Proof of Theorem \ref{theo:ineq}}\label{app:ineq}  
Firstly, knowing Equation (\ref{eq:decvtief1f}) and the fact that $\var(\gtef)$ is a  symmetric and positive semi-definite matrix (variance-covariance matrix), we can write    
\begin{eqnarray}
  \var(\gtef)&= &  \frac{1}{2}           
\int 
\frac{\partial^{|u|} \gtef}{\partial \bo{\x}_{u}}\left(\bo{\x}\right) 
\frac{\partial^{|u|} \gtef^\T}{\partial \bo{\x}_{u}}\left(\bo{\x}'_{u} ,\bo{\x}_{\sim u}\right) 
\prod_{k \in u} \frac{F_{k}\left[\min(\x_{k},\,\x_{k}')\right] -F_{k}(\x_{k})F_{k}(\x_{k}')}{\rho_{k} (\x_{k})\rho_{k} (\x_{k}')}  d\mu(\bo{\x}) d\mu(\bo{\x}')    \nonumber \\
&  & + \frac{1}{2}   
\int  
\frac{\partial^{|u|} \gtef}{\partial \bo{\x}_{u}}\left(\bo{\x}'_{u} ,\bo{\x}_{\sim u}\right)
\frac{\partial^{|u|} \gtef^\T}{\partial \bo{\x}_{u}}\left(\bo{\x}\right) 
\prod_{k \in u} \frac{F_{k}\left[\min(\x_{k},\,\x_{k}')\right] -F_{k}(\x_{k})F_{k}(\x_{k}')}{\rho_{k} (\x_{k})\rho_{k} (\x_{k}')}  d\mu(\bo{\x}) d\mu(\bo{\x}') \, .  \nonumber 
\end{eqnarray}   
 Secondly, let consider the matrix                
\begin{eqnarray}
W &:= & \int  
\frac{\partial^{|u|} \gtef}{\partial \bo{\x}_{u}}\left(\bo{\x}\right) 
\frac{\partial^{|u|} \gtef^\T}{\partial \bo{\x}_{u}}\left(\bo{\x}\right) 
 \int _{\Omega_u} \prod_{k \in u}
\frac{F_{k}\left[\min(\x_{k},\,\x_{k}')\right] -F_{k}(\x_{k})F_{k}(\x_{k}')}{\rho_{k} (\x_{k})^2} d\mu(\bo{\x}_u') d\mu(\bo{\x})
\nonumber \\
 &= &
\int 
\frac{\partial^{|u|} \gtef}{\partial \bo{\x}_{u}}\left(\bo{\x}\right) 
\frac{\partial^{|u|} \gtef^\T}{\partial \bo{\x}_{u}}\left(\bo{\x}\right) 
\left[\prod_{k \in u} \int _{\Omega_k}
\frac{F_{k}\left[\min(\x_{k},\,\x_{k}')\right] -F_{k}(\x_{k})F_{k}(\x_{k}')}{\rho_{k} (\x_{k})^2} d\mu(\bo{\x}_k')  \right] d\mu(\bo{\x})
\nonumber \\
 &= & \frac{1}{2^{|u|}} 
\int 
\frac{\partial^{|u|} \gtef}{\partial \bo{\x}_{u}}\left(\bo{\x}\right) 
\frac{\partial^{|u|} \gtef^\T}{\partial \bo{\x}_{u}}\left(\bo{\x}\right) 
\prod_{k \in u} \frac{F_{k}(\x_{k})\left[1-F_{k}(\x_{k})\right]}{\rho_{k} (\x_{k})^2}  d\mu(\bo{\x}) \, \nonumber ,   
\end{eqnarray}  
as           
\begin{eqnarray}  
\int _{\Omega_k}
F_{k}\left[\min(\x_{k},\,\x_{k}')\right] -F_{k}(\x_{k})F_{k}(\x_{k}') d\mu(\bo{\x}_k') &=& \int_{-\infty}^{x_k} F_{k}(\x_{k}') \left[1 -F_{k}(\x_{k})\right] \rho(x_k') \, dx_k'\nonumber \\ 
   & & + \int_{x_k}^{+\infty} F_{k}(\x_{k}) \left[1 -F_{k}(\x_{k}')\right]  \rho(x_k') \, dx_k' \nonumber \\ 
	 &=& \frac{1}{2}\left[1 -F_{k}(\x_{k})\right]F_{k}(\x_{k})^2 \nonumber \\ 
	 && + F_{k}(\x_{k})\left(\frac{1}{2} - F_{k}(\x_{k}) + \frac{1}{2} F_{k}(\x_{k})^2\right) \nonumber \\       
	&=&    \frac{1}{2} F_{k}(\x_{k}) \left[1 -F_{k}(\x_{k})\right] \, . 
	\nonumber           
\end{eqnarray}     

 The maxtrix $W$ is also a  symmetric and positive semi-definite matrix, as $F_{k}(\x_{k})\left[1-F_{k}(\x_{k})\right] \geq~0$.   \\       
  
Likewise, let define a covariance matrix $R$ like  
\begin{eqnarray} 
R & := & \int \left(\bo{g} -\bo{e} \right) \left(\bo{g}-\bo{e} \right)^\T \,  d\mu(\bo{\x})d\mu(\bo{\x}') \nonumber \\  
 & = & \int \bo{g} \bo{g}^\T \, d\mu(\bo{\x})d\mu(\bo{\x}')  -
\int \bo{g} \bo{e}^\T \, d\mu(\bo{\x})d\mu(\bo{\x}')   -
\int \bo{e} \bo{g}^\T \, d\mu(\bo{\x})d\mu(\bo{\x}')   +
\int \bo{e} \bo{e}^\T \, d\mu(\bo{\x})d\mu(\bo{\x}')  \, , \nonumber     
\end{eqnarray}        
with the functions       
$$    
\bo{g}\left(\bo{\x},\, \bo{\x}'_u \right) := \frac{\partial^{|u|} \gtef}{\partial \bo{\x}_{u}}\left(\bo{\x}\right) 
\prod_{k \in u}
\frac{\sqrt{F_{k}\left[\min(\x_{k},\,\x_{k}')\right] -F_{k}(\x_{k})F_{k}(\x_{k}')}}{\rho_{k} (\x_{k})}\, ,
$$
and             
$$ 
\bo{e}\left(\bo{\x}'_u,\, \bo{\x}\right) := \frac{\partial^{|u|} \gtef}{\partial \bo{\x}_{u}}\left(\bo{\x}'_u,\, \bo{\x}_{\sim u}\right) 
\prod_{k \in u}
\frac{\sqrt{F_{k}\left[\min(\x_{k},\,\x_{k}')\right] -F_{k}(\x_{k})F_{k}(\x_{k}')}}{\rho_{k} (\x_{k}')}\, .
$$             
Thirdly, we can remark that $W=\int \bo{g} \bo{g}^\T \, d\mu(\bo{\x})d\mu(\bo{\x}')=\int \bo{e} \bo{e}^\T \, d\mu(\bo{\x})d\mu(\bo{\x}')$, and
$ \var(\gtef) =\int \bo{g} \bo{e}^\T \, d\mu(\bo{\x})d\mu(\bo{\x}') =
\int \bo{e} \bo{g}^\T \, d\mu(\bo{\x})d\mu(\bo{\x}')$. Therefore
$       
R =2W -2\var(\gtef)         
$, or equivalently $ R/2= W-\var(\gtef)$. The Loewner ordering $\var(\gtef)  \matleq W$  follows, as $ W-\var(\gtef)=R/2$ is a symmetric and positive semi-definite matrix.  \\    
Finally, we have the equality in Corollary \ref{coro:equa}. 

\section{Proof of Theorem \ref{theo:pweqineq}} \label{app:pweqineq}
For Point (i), knowing that $\var(\M_u) \matleq \var(\M_u^{sup})$ and using Equation (\ref{eq:vardec}), we can write 
\begin{eqnarray}  
\var(\M) &\matleq & \sum_{\substack{u, \, u \subseteq \{1,\, \ldots,\, d\}\\ |u|>0}} \var(\tefg_u)     \nonumber \\
     &\matleq & \sum_{\substack{u, \, u \subseteq \{1,\, \ldots,\, d\}\\ |u|>0}} 
		\frac{1}{2^{|u|}} \int_{\Omega} 
\frac{\partial \M}{\partial \bo{\x}_u}\left(\bo{\x}\right)
\frac{\partial \M^\T}{\partial \bo{\x}_u}\left(\bo{\x}\right) 
 \prod_{k \in u} \frac{F_{k}(\x_{k})\left(1 -F_{k}(\x_{k})\right)}{\left[\rho_{k} (\x_{k})\right]^2}  \, d\mu(\bo{\x}) \nonumber \, , 
\end{eqnarray}     
bearing in mind Equation (\ref{eq:Intief}). \\

For Point (ii), $\forall \, j \in J_0$  consider a function $g_j : \R \to \R^\NN$  $g_j(x_j) =\M(x_j\, \bo{x}_{\sim j})$. Bearing in mind the Fubini-Lebesgue theorem, we can see that $\int_{\Omega_j} g_j \, d\mu_j =\bo{0}$ implies $\int_{\Omega} \M \, d\mu  =\bo{0}$ and $\var(\M) =\int_{\Omega_{\sim j}}\var(g_j)\, d\mu_{\sim j}$. By applying the optimal weighted Poincar\'e inequality  (\ref{eq:wpm1}) to $g_j$, we obtain
  
\begin{eqnarray}  
\var(\M) &\matleq & \frac{1}{2} \int_{\Omega} 
\frac{\partial \M}{\partial \x_{j}}\left(\bo{\x}\right) 
\frac{\partial \M^\T}{\partial \x_{j}}\left(\bo{\x}\right) 
 \frac{F_{j}(\x_{j})\left(1 -F_{j}(\x_{j})\right)}{\left[\rho_{j} (\x_{j})\right]^2}  \, d\mu(\bo{\x}) \, , \nonumber
\end{eqnarray} 
as $\frac{\partial \M}{\partial \x_{j}} =\frac{\partial g_j}{\partial \x_{j}}$. The result holds by taking the minimum value of the $|J_0|$ upper bounds of $\var(\M)$, with $|J_0|$ the cardinal of $J_0$.\\   
 
For Point (iii), first, we know from Point (i) that  
\begin{eqnarray}  
\var(\M) &\matleq & \sum_{\substack{u, \, u \subseteq \{1,\, \ldots,\, d\}\\ |u|>0}} \var(\tefg_u)     \nonumber \, .
\end{eqnarray}    

Second, let $D_j :=\int_\Omega \frac{\partial \M}{\partial \x_{j}}\left(\bo{\x}\right)
\frac{\partial \M^\T}{\partial \x_{j}}\left(\bo{\x}\right) 
 \frac{F_{j}(\x_{j})\left(1 -F_{j}(\x_{j})\right)}{\left[\rho_{j} (\x_{j})\right]^2}  \, d\mu(\bo{\x}) $ and $D_{(j)}$ be the $j^{\text{th}}$ smallest value of $D_j,\, j\in \{1,\,\ldots,\, d\}$. The integer $(j)$ is associated with the $j^{\text{th}}$ smallest value of $D_j,\, j\in \{1,\,\ldots,\, d\}$. Knowing that $\var(\tefg_u) \matleq \var(\tefg_j)$ when $\j \in u$ (\cite{roustant14}), we are going to replace   $\var(\tefg_u)$ with $\var(\tefg_j)$ when $j \in u$. In $\sum_{\substack{u, \, u \subseteq \{1,\, \ldots,\, d\}\\ |u|>0}} \var(\tefg_u)$ we have \\  
 $|\mathcal{A}_{(1)}|$ terms that contain $(1)$ (i.e.,  $(1) \in u$); \\ 
$|\mathcal{A}_{(2)}\backslash \mathcal{A}_{(1)} |$ terms such that $(2) \in u$  and $(1) \notin u$; \\     
$|\mathcal{A}_{(j)}\backslash \cup_{i \in \{(1),\, \ldots,\, (j-1)\} } \mathcal{A}_{(i)} |$ terms such that $(j) \in u$  and $(1),\, \ldots,\, (j-1) \notin u$.  Therefore, we have         
\begin{eqnarray}  
\var(\M) &\matleq & \sum_{j=1}^d  |\mathcal{A}_{(j)}\backslash \cup_{i \in \{(1),\, \ldots,\, (j-1)\} } \mathcal{A}_{(i)} | \var(\tefg_{(j)}) \nonumber \\ 
 &\matleq & \sum_{j=1}^d \frac{|\mathcal{A}_{(j)}\backslash \cup_{i \in \{(1),\, \ldots,\, (j-1)\} } \mathcal{A}_{(i)} |}{2}
 \int_{\Omega}    
\frac{\partial \M}{\partial \x_{(j)}}\left(\bo{\x}\right) 
\frac{\partial \M^\T}{\partial \x_{(j)}}\left(\bo{\x}\right) 
 \frac{F_{(j)}(\x_{(j)})\left(1 -F_{(j)}(\x_{(j)})\right)}{\left[\rho_{(j)} (\x_{(j)})\right]^2}  \, d\mu(\bo{\x})   \nonumber \, , 
\end{eqnarray}     
and Point (iii) holds.

\end{document}